\newtheorem{theorem}{Theorem}[section]
\newtheorem{lemma}[theorem]{Lemma}
\newtheorem{corollary}[theorem]{Corollary}
\newtheorem{proposition}[theorem]{Proposition}
\DeclareMathOperator{\Aff}{Aff}
\DeclareMathOperator{\Sim}{Sim}
\DeclareMathOperator{\diam}{diam}
\numberwithin{equation}{section}
\begin{document}

\title{Hyperspaces of convex bodies of constant width}
\author{Sergey Antonyan, Natalia Jonard-P\'erez and Sa\'ul Ju\'arez-Ord\'o\~nez}


\address{Departamento de  Matem\'aticas,
Facultad de Ciencias, Universidad Nacional Aut\'onoma de M\'exico, 04510 M\'exico Distrito Federal, M\'exico.}
\email{(S.\,Antonyan) antonyan@unam.mx}
\email{(N.\,Jonard-P\'erez) nat@ciencias.unam.mx}
\email{(S.\,Ju\'arez-Ord\'o\~nez) sjo@ciencias.unam.mx}


\thanks{{\it 2010 Mathematics Subject Classification}. 57N20, 57S10, 52A99, 52A20, 54B20, 54C55.}
\thanks{{\it  Key words and phrases}. Convex body, constant width, constant relative width, Hyperspace, $Q$-manifold.}
\thanks{The first, second and third authors were supported by CONACYT (Mexico) grants 000165195, 204028 and 220836, respectively.}

\begin{abstract}
For every $n\geq1$, let $cc(\mathbb{R}^n)$ denote the hyperspace of all non-empty compact convex subsets of the Euclidean space $\mathbb{R}^n$ endowed with the Hausdorff metric topology. For every non-empty convex subset $D$ of $[0,\infty)$ we denote by $cw_D(\mathbb{R}^n)$ the subspace of $cc(\mathbb{R}^n)$ consisting of all compact convex sets of constant width $d\in D$ and by $crw_D(\mathbb{R}^n)$ the subspace of the product $cc(\mathbb{R}^n)\times cc(\mathbb{R}^n)$ consisting of all pairs of compact convex sets of constant relative width $d\in D$. In this paper we prove that $cw_D(\mathbb{R}^n)$ and $crw_D(\mathbb{R}^n)$ are homeomorphic to $D\times\mathbb{R}^n\times Q$, whenever $D\neq\{0\}$ and $n\geq2$, where $Q$ denotes the Hilbert cube. In particular, the hyperspace $cw(\mathbb{R}^n)$ of all compact convex bodies of constant width as well as the hyperspace $crw(\mathbb{R}^n)$ of all pairs of compact convex sets of constant relative positive width are homeomorphic to $\mathbb{R}^{n+1}\times Q$. 
\end{abstract}

\maketitle

\maketitle \markboth{S.A. ANTONYAN,  N. JONARD-P\'EREZ AND   S.\,JU\'AREZ-ORD\'O\~NEZ}{HYPERSPACES OF  CONVEX BODIES OF CONSTANT WITH}

\section{Introduction}

Let $cc(\mathbb{R}^n)$, $n\geq1$, denote the hyperspace of all non-empty compact convex subsets of $\mathbb{R}^n$ endowed with the topology induced by the Hausdorff metric: 
$$\rho_H(Y,Z)=\max\left\{ \sup\limits_{z\in Z}\|z-Y\|,~~~~~\sup\limits_{y\in Y}\|y-Z\|\right\},\quad Y,Z\in cc(\mathbb{R}^n)$$ where $\|\cdot\|$ denotes the euclidean norm on $\mathbb{R}^n$, i.e., $$\|x\|^2=\sum_{i=1}^nx_i^2,\quad x=(x_1,x_2,...,x_n)\in\mathbb{R}^n.$$ 

In case $n=1$, it is easy to see that 
$cc(\mathbb{R})$ is homeomorphic to $\mathbb{R}\times[0,1)$ and for every $n\geq2$, it is well known that $cc(\mathbb{R}^n)$ is homeomorphic to the \textit{punctured Hilbert cube} $Q_0:=Q\backslash\{*\}$, where $Q:=[0,1]^\infty$ is the Hilbert cube (see \cite[Theorem~7.3]{Nadler}).

By a \textit{convex body} we mean a compact convex subset of $\mathbb{R}^n$ with non-empty interior. A compact convex set in $\mathbb{R}^n$ is said to be of constant width $d\geq0$, if the distance between any two of its parallel support hyperplanes is equal to $d$ (see e.g., \cite[Chapter~7, \S\,6]{Webster}).


As a generalization of compact convex sets of constant width, H. Maehara introduced in \cite{Maehara} the concept of pairs of compact convex sets of constant relative width and showed that these pairs share certain properties of compact convex sets of constant width. A pair $(Y,Z)\in cc(\mathbb{R}^n)\times cc(\mathbb{R}^n)$ is said to be of constant relative width $d\geq0$, if $$h_Y(u)+h_Z(-u)=d$$ for every $u\in\mathbb{S}^{n-1}$, where $h_Y$ and $h_Z$ denote the support functions of $Y$ and $Z$, respectively (see formula (\ref{sup}) below) and $$\mathbb{S}^{n-1}=\bigl\{x\in\mathbb{R}^n\,\big|\,\|x\|=1\bigr\}$$ is the unit sphere in $\mathbb{R}^n$.



For every non-empty convex subset $D$ of $[0,\infty)$ we denote by $cw_D(\mathbb{R}^n)$ the subspace of $cc(\mathbb{R}^n)$ consisting of all compact convex sets of constant width $d\in D$ and by $crw_D(\mathbb{R}^n)$ the subspace of the product $cc(\mathbb{R}^n)\times cc(\mathbb{R}^n)$ consisting of all pairs of compact convex sets of constant relative width $d\in D$.
We shall use $cw(\mathbb{R}^n)$ and $crw(\mathbb{R}^n)$   for $cw_{(0, \infty)}(\mathbb{R}^n)$ and  $crw_{(0, \infty)}(\mathbb{R}^n)$, respective�ly.

Note that if $D\subset(0,\infty)$, then every $A\in cw_D(\mathbb{R}^n)$ is a convex body. This does not hold in the hyperspace $crw_D(\mathbb{R}^n)$, i.e., there are pairs $(Y,Z)$ of compact convex sets of constant relative width $d>0$, such that either $Y$ or $Z$ is not a convex body. For instance, the pair $(\mathbb{B}^n,\{0\})$ is of constant width $1$, where $$\mathbb{B}^n=\bigl\{x\in\mathbb{R}^n\,\big|\,\|x\|\leq1\bigr\}$$ is the unit ball in $\mathbb{R}^n$, while $\{0\}$ is not a body.

Note also that if $D=\{0\}$, then the hyperspaces $cw_D(\mathbb{R}^n)=\bigl\{\{x\}\,\big|\,x\in\mathbb{R}^n\bigr\}$ and $crw_D(\mathbb{R}^n)=\bigl\{\bigl(\{x\},\{x\}\bigr)\,\big|\,x\in\mathbb{R}^n\bigr\}$ are both homeomorphic to $\mathbb{R}^n$.

In case $n=1$, it is easy to see that $cw_D(\mathbb{R})$ is homeomorphic to $D\times\mathbb{R}$ for every non-empty convex subset $D$ of $[0,\infty)$ and that $crw_D(\mathbb{R})$ is homeomorphic to $D\times\mathbb{R}\times[0,1]$ for every non-empty convex subset $D\neq\{0\}$ of $[0,\infty)$ (see Propositions \ref{cw1} and \ref{crw1}).

However, for every $n\geq2$, the topological structure of $cw_D(\mathbb{R}^n)$  and $crw_D(\mathbb{R}^n)$ had remained unknown, except for the cases of convex sets $D$ of the form $[d_0,\infty)$ with $d_0\geq0$, for which it was proved in \cite[Corollary~1.2]{Bazylevych2} (relying on \cite[Theorem~1.1]{Bazylevych2}) that $cw_D(\mathbb{R}^n)$ is  homeomorphic to the punctured Hilbert cube $Q_0$.

It is the purpose of this paper to  give a complete description of the topological structure of the hyperspaces $cw_D(\mathbb{R}^n)$ and $crw_D(\mathbb{R}^n)$ for every $n\geq2$ and every non-empty convex subset $D\neq\{0\}$ of $[0,\infty)$. Namely, we prove in Theorem~\ref{main1} that the hyperspace $cw_D(\mathbb{R}^n)$ is  homeomorphic to $D\times\mathbb{R}^n\times Q$. In particular, the hyperspace $cw(\mathbb{R}^n)$ of all convex bodies of constant width is homeomorphic to $\mathbb{R}^{n+1}\times Q$ (Corollary \ref{maincor}
). Besides, we prove in Theorem~\ref{main2} that   the hyperspace  $crw_D(\mathbb{R}^n)$
 is homeomorphic to  $cw_D(\mathbb{R}^n)$ (in particular, $crw(\mathbb{R}^n)$ is homeomorphic to $cw(\mathbb{R}^n)$).

Our argument is based, among other things,  on \cite[Theorem~1.1]{Bazylevych2} which asserts that the hyperspace $cw_D(\mathbb{R}^n)$, with $D\ne\{0\}$, is a contractible Hilbert cube manifold. However, the proof of this result  given in \cite{Bazylevych2}  contains a gap. Namely, it is claimed within the proof that for any $n\geq3$ and any regular $n$-simplex $\Delta\subset\mathbb{R}^n$ of side length $d>0$, the intersection of all closed balls with centers at the vertices of $\Delta$ and radius $d$ is of constant width $d$. But, this claim  is not true. In fact, for every $n\geq3$, no finite intersection of balls in $\mathbb{R}^n$ is of constant width, unless it reduces to a single ball (see \cite[Corollary~3.3]{Lachand}). This shows a striking difference with the two-dimensional case, where the intersection of all closed discs in $\mathbb{R}^2$ of radius $d>0$ and centers at the vertices of an equilateral triangle of side length $d$, is a convex body of constant width $d$, which is well known under the name of {\it Reuleaux triangle} (see e.g., \cite[Chapter~7, \S\,6]{Webster}).  Fortunately, this gap can be filled in using \cite[Theorem~4.1]{Lachand}, which describes a method for constructing convex bodies of constant width in arbitrary dimension $n$, starting from a given projection in dimension $n-1$.  We  decided to present below in   Theorem~\ref{Correctionl}  a detailed correct proof of \cite[Theorem~1.1]{Bazylevych2}; this makes our presentation more complete.

\smallskip

\section{Preliminaries}

All maps between topological spaces are assumed to be continuous.

A metrizable space $X$ is called an \textit{absolute neighborhood retract} (denoted by $X\in\mathrm{ANR}$) if for any metrizable space $Z$ containing $X$ as a closed subset, there exist a neighborhood $U$ of $X$ in $Z$ and a retraction $r:U\to X$.

Recall that a map $f:X\to Y$ between topological spaces is called \textit{proper} if for every compact subset $K$ of $Y$, the inverse image $f^{-1}(K)$ is a compact subset of $X$. A proper map $f:X\to Y$ between ANR's is called \textit{cell-like} if it is onto and each point inverse $f^{-1}(y)$ has the property $UV^{\infty}$, i.e., for each neighborhood $U$ of $f^{-1}(y)$ there exists a neighborhood $V\subset U$ of $f^{-1}(y)$ such that the inclusion $V\hookrightarrow U$ is homotopic to a constant map of $V$ into $U$. In particular, if $f^{-1}(y)$ is  contractible, then it has the property $UV^{\infty}$ (see \cite[Chapter~XIII]{Chapman}).

We refer the reader to \cite{Eggleston}, \cite{Mozsynska}, \cite{Schneider} and \cite{Webster} for the theory of convex sets. However, we recall here some notions of convexity that will be used throughout the paper. We begin with the Minkowski operations.

For any subsets $Y$ and $Z$ of $\mathbb{R}^n$ and $t\in\mathbb{R}$, the sets $$Y+Z=\{y+z\mid y\in Y, z\in Z\}\quad\quad\textnormal{and}\quad\quad tY=\{ty\mid y\in\mathbb{R}^n\}$$ are called the \textit{Minkowski sum} of $Y$ and $Z$ and the \textit{product} of $Y$ by $t$, respectively. It is well known that these operations preserve compactness and convexity and are continuous with respect to the Hausdorff metric.

As usual, we denote by $C(\mathbb{S}^{n-1})$ the Banach space of all maps from $\mathbb{S}^{n-1}$ to $\mathbb{R}$ topologized by the supremum metric: $$\varrho(f,g)=\sup\bigl\{|f(u)-g(u)|\,\big|\,u\in\mathbb{S}^{n-1}\bigr\},\quad f,g\in C(\mathbb{S}^{n-1}).$$

The \textit{support function} of $Y\in cc(\mathbb{R}^n)$ is the map $h_Y\in C(\mathbb{S}^{n-1})$ defined by
\begin{equation}\label{sup}
h_Y(u)=\max\bigl\{\left\langle y,u\right\rangle\big|\,y\in Y\bigr\},\quad u\in\mathbb{S}^{n-1}
\end{equation}
where $\left\langle\,\,,\,\right\rangle$ denotes the standard inner product in $\mathbb{R}^n$. 

For every $Y, Z\in cc(\mathbb{R}^n)$ and $\alpha, \beta\geq0$, the support function of $\alpha Y+\beta Z$ satisfies the following equality:
\begin{equation}\label{suppconvex}
h_{\alpha Y+\beta Z}=\alpha h_Y+\beta h_Z.
\end{equation}
(see e.g., \cite[Theorem~5.6.2]{Webster}).

It is well known that the map $\varphi:cc(\mathbb{R}^n)\to C(\mathbb{S}^{n-1})$ defined by
\begin{equation}\label{embedding1}
\varphi(Y)=h_Y,\quad Y\in cc(\mathbb{R}^n)
\end{equation}
is an affine isometric embedding and the image $\varphi\bigl(cc(\mathbb{R}^n)\bigr)$ is a locally compact closed convex subset of the Banach space $C(\mathbb{S}^{n-1})$ (see e.g., \cite[p.~57,~Note~6]{Schneider}). Here, the map $\varphi$ is affine with respect to the Minkowski operations, i.e., for any $Y,Z\in cc(\mathbb{R}^n)$ and $t\in[0,1]$, equality (\ref{suppconvex}) clearly implies that
\begin{equation*}
\varphi\bigl(tY+(1-t)Z\bigr)=t\varphi(Y)+(1-t)\varphi(Z).
\end{equation*}

The \textit{width function} of $Y\in cc(\mathbb{R}^n)$ is the map $w_Y\in C(\mathbb{S}^{n-1})$ defined by
\begin{equation}\label{widths}
w_Y(u)=h_Y(u)+h_Y(-u),\quad u\in\mathbb{S}^{n-1}.
\end{equation}

Thus, a compact convex set $Y$ in $\mathbb{R}^n$ has \textit{constant width} $d\geq0$ if $w_Y$ is the constant map with value $d$. Equivalently, if
\begin{equation}\label{equiv}
Y-Y=d\mathbb{B}^n=\bigl\{x\in\mathbb{R}^n\,\big|\,\|x\|\leq d\bigr\}
\end{equation}
where $-Y=\{-y\mid y\in Y\}$  (see e.g., \cite[Chapter 7, \S 6]{Webster}).

In general, we denote by $$B(x,r)=x+r\mathbb{B}^n=\bigl\{y\in\mathbb{R}^n\,\big|\,\|x-y\|\leq r\bigr\}$$ the closed ball with center $x\in\mathbb{R}^n$ and radius $r\geq0$.

The diameter of $Y\in cc(\mathbb{R}^n)$ is denoted by $\diam Y$. It is well known that the function $\diam:cc(\mathbb{R}^n)\to[0,\infty)$ defined by
\begin{equation*}
Y\mapsto\diam Y,\quad Y\in cc(\mathbb{R}^n)
\end{equation*}
is continuous (see e.g., \cite[Example~2.7.11]{Webster}). Clearly, if $Y\in cw_{[0,\infty)}(\mathbb{R}^n)$, then $\diam Y$ is just the width of $Y$. Let $\omega$ denote the restriction to $cw_{[0,\infty)}(\mathbb{R}^n)$ of the diameter map. Then
\begin{equation}\label{diam}
\omega:cw_{[0,\infty)}(\mathbb{R}^n)\to[0,\infty)
\end{equation}
is obviously continuous. Furthermore, for every $Y, Z\in cw_{[0,\infty)}(\mathbb{R}^n)$ and $t\in[0,1]$, one has
\begin{equation}\label{omegaafin}
\omega\bigl(tY+(1-t)Z\bigr)=t\omega(Y)+(1-t)\omega(Z).
\end{equation}
Indeed,
\begin{align*}
\bigl(tY+(1-t)Z\bigr)-\bigl(tY+(1-t)Z\bigr) &=t(Y-Y)+(1-t)(Z-Z) \\
&=t\,\omega(Y)\,\mathbb{B}^n+(1-t)\,\omega(Z)\,\mathbb{B}^n \\
&=\bigl(t\,\omega(Y)+(1-t)\,\omega(Z)\bigr)\mathbb{B}^n.
\end{align*}
Hence, according to equality (\ref{equiv}), $tY+(1-t)Z$ is of constant width $t\,\omega(Y)+(1-t)\,\omega(Z)$.

\smallskip

The concept of a compact convex set of constant width was extended by H. Maehara \cite{Maehara} to that of pairs of compact convex sets of constant relative width. Namely, a \textit{pair} $(Y,Z)$ of compact convex sets in $\mathbb{R}^n$ is said to be of \textit{constant relative width} $d\geq0$ if the map $w_{(Y,Z)}\in C(\mathbb{S}^{n-1})$ defined by
\begin{equation}\label{widthpairmap}
w_{(Y,Z)}(u)=h_Y(u)+h_Z(-u),\quad u\in\mathbb{S}^{n-1}
\end{equation}
is a constant map with value $d$. Equivalently, if
\begin{equation*}
Y-Z=d\mathbb{B}^n.
\end{equation*}

Obviously, a compact convex set $Y$ of $\mathbb{R}^n$ is of constant width $d\geq0$ if and only if $(Y,Y)$ is a pair of constant relative width $d\geq0$. From this fact, one gets a natural embedding $e:cw_D(\mathbb{R}^n)\to crw_D(\mathbb{R}^n)$ given by the rule:
\begin{equation}\label{embedding2}
e(Y)=(Y,Y),\quad Y\in cw_D(\mathbb{R}^n).
\end{equation}

We consider the Minkowski operations in $cc(\mathbb{R}^n)\times cc(\mathbb{R}^n)$, i.e., for every $t\in\mathbb{R}$ and $(Y,Z), (A,E)\in cc(\mathbb{R}^n)\times cc(\mathbb{R}^n)$, $$(Y,Z)+(A,E)=(Y+A,Z+E)\quad\textnormal{and}\quad t(Y,Z)=(tY,tZ).$$

It follows from equality (\ref{suppconvex}) and formula (\ref{embedding1}) that the map $$\varphi\times\varphi:cc(\mathbb{R}^n)\times cc(\mathbb{R}^n)\longrightarrow C(\mathbb{S}^{n-1})\times C(\mathbb{S}^{n-1})$$ defined by
\begin{equation}\label{embedding3}
(Y,Z)\mapsto (h_Y,h_Z),\quad Y,Z\in cc(\mathbb{R}^n)
\end{equation}
embeds $cc(\mathbb{R}^n)\times cc(\mathbb{R}^n)$ as a closed convex subset in the Banach space $C(\mathbb{S}^{n-1})\times C(\mathbb{S}^{n-1})$.


\bigskip


Next we recall that the group $\Aff(n)$ of all \textit{affine transformations} of $\mathbb{R}^n$ is defined to be the (internal) semidirect product: $$\mathbb R^{n}\rtimes GL(n)$$ where $GL(n)$ is the group of all non-singular linear transformations of $\mathbb{R}^n$ endowed with the topology inherited from $\mathbb{R}^{n^2}$  (see e.g. \cite[p.~102]{Alperin}). As a semidirect product, $\Aff(n)$ is topologized by the product topology of $\mathbb R^{n}\times GL(n)$, becoming a Lie group with two connected components. Each element $g\in\Aff(n)$ is usually represented by $g=T_v+\sigma$, where $\sigma\in GL(n)$ and $T_v:\mathbb R^{n}\to\mathbb R^{n}$ is the translation by $v\in\mathbb{R}^n$, i.e., $$T_v(x)=v+x,\quad x\in\mathbb{R}^n$$ and thus,  $$g(x)=v+\sigma(x),\quad x\in\mathbb{R}^n.$$ Note that for every $g\in\Aff(n)$, $t\in\mathbb{R}$ and $x,y\in\mathbb{R}^n$, we have that
\begin{equation}\label{remark}
g\bigl(tx+(1-t)y\bigr)=tg(x)+(1-t)g(y).
\end{equation}
Indeed,
\begin{align*}
  g\bigl(tx+(1-t)y\bigr) &=v+\sigma\bigl(tx+(1-t)y\bigr) \\
   &=tv+(1-t)v+t\sigma(x)+(1-t)\sigma(y) \\
   &=t\bigl(v+\sigma(x)\bigr)+(1-t)\bigl(v+\sigma(y)\bigr) \\
   &=tg(x)+(1-t)g(y).
\end{align*}

A map $g:\mathbb{R}^n\to\mathbb{R}^n$ is called a \textit{similarity transformation} of $\mathbb{R}^n$, if there is a $\lambda>0$, called the \textit{ratio} of $g$, such that $$\|g(x)-g(y)\|=\lambda\|x-y\|$$ for every $x,y\in\mathbb{R}^n$ (see e.g. \cite[Chapter~4, \S\,1]{Mozsynska}). Clearly, every similarity transformation $g:\mathbb{R}^n\to\mathbb{R}^n$ with ratio $\lambda$ is an affine transformation of $\mathbb{R}^n$. Indeed, such a $g$ is just the composition of the homothety with center at the origin and ratio $\lambda$ and the isometry $\frac{1}{\lambda}g$ (see \cite[p.~xii]{Schneider}). Let $\Sim(n)$ denote the closed subgroup of $\Aff(n)$ consisting of all similarity transformations of $\mathbb{R}^n$. 
Clearly, the natural action of $\Sim(n)$ on $\mathbb{R}^n$ given by the evaluation map
\begin{equation}\label{action1}
(g,x)\mapsto gx:=g(x),\quad g\in\Sim(n),\quad x\in\mathbb{R}^n
\end{equation}
is continuous (see e.g., \cite[Proposition~2.6.11 and Theorem~3.4.3]{Engelking}). This action induces a continuous action on the hyperspace $cc(\mathbb{R}^n)$, which is given by the rule:
\begin{equation}\label{action2}
(g,Y)\mapsto gY=\{gy\mid y\in Y\},\quad g\in\Sim(n),\quad Y\in cc(\mathbb{R}^n)
\end{equation}
(see \cite[Proposition 3.1]{AntJon}).

It is a well known classical fact that for every $Y\in cc(\mathbb{R}^n)$, there is a unique ball $\mathscr{B}(Y)\subset\mathbb{R}^n$ of minimal radius $\mathscr{R}(Y)$ containing $Y$ (see, e.g., \cite[Theorem~12.7.5]{Mozsynska}). The ball $\mathscr{B}(Y)$  is known under the name of {\it Chebyshev ball of}\, $Y$. In this case, the center of $\mathscr{B}(Y)$ belongs to $Y$, and we will denote it by $\mathscr{C}(Y)$. By \cite[Corollary~12.7.6]{Mozsynska}, the function $\mathscr{C}:cc(\mathbb{R}^n)\to\mathbb{R}^n$ defined by
\begin{equation}\label{Chevyshev}
Y\mapsto\mathscr{C}(Y),\quad Y\in cc(\mathbb{R}^n)
\end{equation}
is continuous. Furthermore, if $cc(\mathbb{R}^n)$ and $\mathbb{R}^n$ are endowed with the actions (\ref{action2}) and (\ref{action1}), respectively, then clearly the map $\mathscr{C}$ is also $\Sim(n)$-equivariant, i.e., $$\mathscr{C}(gY)=g\mathscr{C}(Y)$$ for every $g\in\Sim(n)$ and $Y\in cc(\mathbb{R}^n)$ (see e.g., \cite[Exercise 12.20]{Mozsynska}).

\smallskip

A \textit{Hilbert cube manifold} or a $Q$-\textit{manifold} is a separable, metrizable space that admits an open cover each member of which is homeomorphic to an open subset of the Hilbert cube $Q$. Every $Q$-manifold is \textit{stable}, i.e., it is homeomorphic to its product with the Hilbert cube $Q$ (see \cite[Theorem~15.1]{Chapman}). We refer the reader to \cite{Chapman} and \cite{Van Mill} for the theory of $Q$-manifolds.

Finally, we recall Edwards' Theorem (see \cite[Theorem~43.1]{Chapman}), which by the stability Theorem for $Q$-manifolds, 
can be stated in the following form:

\begin{theorem}\label{Edwards}
\textnormal{(R.D. Edwards)} Let $X$ be a $Q$-manifold and $Y$ a locally compact \rm{ANR}. If there exists a cell-like map $f:X\to Y$ then $X$ is homeomorphic to the product $Y\times Q$.
\end{theorem}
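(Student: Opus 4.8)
The plan is to read off this statement from the standard Hilbert‑cube‑manifold machinery — Edwards' and Chapman's results together with the stability theorem quoted above — so no new idea is required. First I would pass from $f\colon X\to Y$ to the product map $f\times\mathrm{id}_Q\colon X\times Q\to Y\times Q$ and check that it is again cell‑like: it is proper because for compact $K\subset Y\times Q$ the set $(f\times\mathrm{id}_Q)^{-1}(K)$ is closed in the compact set $f^{-1}\bigl(p(K)\bigr)\times q(K)$, where $p,q$ are the coordinate projections of $Y\times Q$; and each of its point‑inverses is $f^{-1}(y)\times\{t\}$, which has the $UV^\infty$ property because $f^{-1}(y)$ does (contract a basic neighbourhood $V\times W$ of $f^{-1}(y)\times\{t\}$ using the $UV^\infty$‑homotopy for $f^{-1}(y)$ inside $V$ and shrinking $W$ to $t$).

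Next I would import the two facts from the cited literature: (i) by Edwards' theorem the product of a locally compact ANR with $Q$ is a $Q$‑manifold, so $Y\times Q$ is a $Q$‑manifold, and $X\times Q$ is a $Q$‑manifold because $X$ already is; (ii) by \cite[Theorem~43.1]{Chapman}, a cell‑like map between $Q$‑manifolds is a near‑homeomorphism. Applying (ii) to $f\times\mathrm{id}_Q$ yields a homeomorphism $X\times Q\cong Y\times Q$. Finally, the stability theorem provides a homeomorphism $X\cong X\times Q$, and composing the two gives $X\cong Y\times Q$, as claimed. (If $Y$ is already known to be a $Q$‑manifold, one can shortcut: $X\cong Y$ directly from \cite[Theorem~43.1]{Chapman}, and $Y\cong Y\times Q$ from stability.)

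The whole argument is essentially bookkeeping, and I do not expect any real obstacle. The one step deserving a short verification rather than a citation is the claim that $f\times\mathrm{id}_Q$ is cell‑like; the genuinely deep content — that $Y\times Q$ is a $Q$‑manifold and that cell‑like maps of $Q$‑manifolds are near‑homeomorphisms — is exactly what is being taken over from Edwards and Chapman, and the only subtlety is remembering why it is $Y\times Q$, and not $Y$, that appears on the right‑hand side.
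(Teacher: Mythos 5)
Your derivation is correct and is exactly the one the paper alludes to: the paper states this theorem without proof as a "recalled" consequence of Chapman's Theorem 43.1 (the cell-like approximation machinery applied to $f\times\mathrm{id}_Q$, with $Y\times Q$ a $Q$-manifold by Edwards' ANR theorem) together with the stability theorem $X\cong X\times Q$. Your verification that $f\times\mathrm{id}_Q$ is again cell-like is the only step not purely a citation, and it is carried out correctly.
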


\smallskip

\section{The hyperspaces $cw_D(\mathbb{R}^n)$}

In this section we describe for every $n\geq1$, the topology of the hyperspaces $cw_D(\mathbb{R}^n)$. We begin with the case $n=1$.

\begin{proposition}\label{cw1}
Let $D$ be a non-empty convex subset of $[0,\infty)$. Then the hyperspace $cw_D(\mathbb{R})$ is homeomorphic to $D\times\mathbb{R}$.
\end{proposition}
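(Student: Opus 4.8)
The plan is to exhibit an explicit homeomorphism $cw_D(\mathbb{R})\to D\times\mathbb{R}$ using two natural continuous invariants: the width and a center. In dimension one, a compact convex set is just a closed interval $[a,b]$ with $a\leq b$, and its width (equivalently, its diameter) is $b-a$. There is no constraint on an interval being of constant width — every interval trivially is — so $cw_D(\mathbb{R})$ is precisely the set of intervals $[a,b]$ with $b-a\in D$. The two invariants I would use are $\omega(Y)=\diam Y$ (the width), which is continuous by the discussion around (\ref{diam}), and the midpoint (or equivalently the Chebyshev center $\mathscr{C}(Y)$ from (\ref{Chevyshev}), which for an interval is exactly its midpoint), which is continuous as well.

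Concretely, I would define $f:cw_D(\mathbb{R})\to D\times\mathbb{R}$ by $f(Y)=\bigl(\omega(Y),\mathscr{C}(Y)\bigr)$; for $Y=[a,b]$ this is $\bigl(b-a,\tfrac{a+b}{2}\bigr)$. Its inverse is the map $g:D\times\mathbb{R}\to cw_D(\mathbb{R})$ given by $g(d,c)=\bigl[c-\tfrac{d}{2},\,c+\tfrac{d}{2}\bigr]$, which one checks lands in $cw_D(\mathbb{R})$ since its width is $d\in D$. That $f$ and $g$ are mutually inverse is an immediate computation. Continuity of $f$ follows from continuity of $\omega$ and of $\mathscr{C}$ (both cited earlier). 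Continuity of $g$ can be seen directly: the interval $[c-d/2,c+d/2]$ equals the Minkowski expression $\{c\}+\tfrac{d}{2}(\mathbb B^1)$ with $\mathbb B^1=[-1,1]$, and Minkowski addition and scalar multiplication are continuous with respect to the Hausdorff metric (as recalled in the Preliminaries), while $(d,c)\mapsto\{c\}$ and $(d,c)\mapsto \tfrac{d}{2}$ are obviously continuous; alternatively one estimates $\rho_H\bigl(g(d,c),g(d',c')\bigr)\leq |c-c'|+\tfrac12|d-d'|$ by hand.

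There is essentially no obstacle here — the only small point to be careful about is the degenerate case $d=0$, where $g(0,c)=\{c\}$ is a singleton; this is still a legitimate element of $cc(\mathbb{R})$ of constant width $0$, so the formulas and the Hausdorff-metric estimates go through without change, and $f$ and $g$ remain mutually inverse homeomorphisms on the nose. Hence $cw_D(\mathbb{R})\cong D\times\mathbb{R}$.
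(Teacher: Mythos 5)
Your map $f(Y)=\bigl(\omega(Y),\mathscr{C}(Y)\bigr)=\bigl(b-a,\tfrac{a+b}{2}\bigr)$ is exactly the homeomorphism the paper uses, and your verification (explicit inverse, continuity of both directions, the degenerate case $d=0$) just fills in what the paper dismisses as ``a simple calculation.'' The proposal is correct and takes essentially the same approach.
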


\begin{proof}
Define a map $f:cw_D(\mathbb{R})\to D\times\mathbb{R}$ by the rule: $$f\bigl([x,y]\bigr)=\bigl(y-x,(x+y)/2\bigr),\quad[x,y]\in cw_D(\mathbb{R}).$$ A simple calculation shows that $f$ is a homeomorphism.
\end{proof}

Now we assume, for the rest of the section, that $n\geq2$.

\begin{lemma}\label{centersum}
Let $Y\in cc(\mathbb{R}^n)$ be such that $\mathscr{C}(Y)=0$. Then $\mathscr{C}(Y+B)=0$ for every closed ball $B=B(0,r)$.
\end{lemma}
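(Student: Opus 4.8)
The plan is to use the characterization of the Chebyshev ball by minimality of the enclosing radius, together with the translation/additivity behavior of the radius function $\mathscr{R}$ under Minkowski sums with a ball. First I would observe that $B(0,r)$ has Chebyshev center $0$ and radius $r$, and that if $Y\subset B(0,\mathscr{R}(Y))$ then $Y+B(0,r)\subset B(0,\mathscr{R}(Y))+B(0,r)=B(0,\mathscr{R}(Y)+r)$; hence $\mathscr{R}(Y+B(0,r))\le\mathscr{R}(Y)+r$. So it suffices to prove the reverse inequality is forced to be an equality precisely when the center is $0$, or more directly, to show that any ball $B(x,s)$ containing $Y+B(0,r)$ must have $s\ge\mathscr{R}(Y)+r$ and, if $s=\mathscr{R}(Y)+r$, that $x=0$ (uniqueness of the Chebyshev ball then finishes the argument).

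The key step is the following: if $B(x,s)\supset Y+B(0,r)$, then for every $y\in Y$ we have $y+B(0,r)\subset B(x,s)$, which means $B(y,r)\subset B(x,s)$, and this forces $\|y-x\|\le s-r$, i.e. $Y\subset B(x,s-r)$. By minimality of $\mathscr{R}(Y)$ we get $s-r\ge\mathscr{R}(Y)$, so $s\ge\mathscr{R}(Y)+r$; combined with the upper bound above this shows $\mathscr{R}(Y+B(0,r))=\mathscr{R}(Y)+r$. Moreover, if $s=\mathscr{R}(Y)+r$, then $Y\subset B(x,\mathscr{R}(Y))$, so $B(x,\mathscr{R}(Y))$ is an enclosing ball of $Y$ of minimal radius; by uniqueness of the Chebyshev ball of $Y$ we conclude $x=\mathscr{C}(Y)=0$. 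Therefore $B(0,\mathscr{R}(Y)+r)$ is the unique ball of minimal radius containing $Y+B(0,r)$, i.e. $\mathscr{B}(Y+B(0,r))=B(0,\mathscr{R}(Y)+r)$ and in particular $\mathscr{C}(Y+B(0,r))=0$.

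The only point requiring a small justification is the inclusion-of-balls claim $B(y,r)\subset B(x,s)\Rightarrow\|x-y\|\le s-r$; this is elementary, since the point $y+r\frac{y-x}{\|y-x\|}$ (when $y\ne x$) lies in $B(y,r)$ at distance $\|y-x\|+r$ from $x$, which must be $\le s$, and the case $y=x$ is trivial. I do not expect any genuine obstacle here: the whole argument is a direct consequence of the defining minimality property of $\mathscr{R}$ and the uniqueness of $\mathscr{B}(Y)$ quoted from \cite[Theorem~12.7.5]{Mozsynska}, plus the behavior of balls under Minkowski addition via (\ref{equiv})-type identities. One could alternatively phrase it entirely through support functions using (\ref{suppconvex}), but the geometric argument above is shorter.
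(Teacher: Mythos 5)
Your argument is correct and follows essentially the same route as the paper: both establish $\mathscr{R}(Y+B)=\mathscr{R}(Y)+r$ by combining the easy upper bound $Y+B\subset\mathscr{B}(Y)+B$ with the observation that $B(y,r)\subset B(x,s)$ forces $\|y-x\|\le s-r$ (the paper packages this as the set $O$ of admissible centers), and then both invoke the uniqueness of the Chebyshev ball of $Y$ to pin the center at the origin. No gaps; the only difference is that you quantify over all enclosing balls $B(x,s)$ while the paper applies the same reasoning directly to $\mathscr{B}(Y+B)$.
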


\begin{proof}
Let $\delta=\mathscr{R}(Y)$ and $\varepsilon=\mathscr{R}(Y+B)$ be the radii of $\mathscr{B}(Y)$ and $\mathscr{B}(Y+B)$, respectively. Then $\mathscr{B}(Y)+B$ is just the closed ball $B(0,\delta+r)$. Since $Y\subset\mathscr{B}(Y)$ and the Minkowski addition preserves inclusions, we get that $Y+B\subset\mathscr{B}(Y)+B$. By minimality of the radius $\varepsilon=\mathscr{R}(Y+B)$, we have that $\varepsilon\leq\delta+r$. Let $$O=\bigl\{x\in\mathscr{B}(Y+B)\,\big|\,B(x,r)\subset\mathscr{B}(Y+B)\bigr\}.$$ Clearly, $O=B(z,\varepsilon-r)$, where $z=\mathscr{C}(Y+B)$. Since $Y\subset Y+B\subset\mathscr{B}(Y+B)$ and since for every $y\in Y$, $$B(y,r)\subset\bigcup_{\gamma\in Y}B(\gamma,r)=\bigcup_{\gamma\in Y}\bigl(\gamma+B(0,r)\bigr)=Y+B\subset\mathscr{B}(Y+B)$$ we infer that $Y\subset O$. By minimality of $\mathscr{B}(Y)$, we have that $\delta\leq\varepsilon-r$. Consequently, $\varepsilon=\delta+r$. Uniqueness of $\mathscr{B}(Y+B)$ yields that $z=\mathscr{C}(Y+B)=0$, as required.
\end{proof}

\begin{lemma}\label{samecenter}
Let $B$ be a closed ball with center $y\in\mathbb{R}^n$ and let $Y\in cc(\mathbb{R}^n)$ be such that $\mathscr{C}(Y)=y$. Then $\mathscr{C}\bigl(tY+(1-t)B\bigr)=y$ for every $t\in\mathbb{R}$.
\end{lemma}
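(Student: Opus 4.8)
The plan is to reduce the statement to the situation already treated in Lemma~\ref{centersum}: I would translate $Y$ so that its Chebyshev center lands at the origin, and then exploit the $\Sim(n)$-equivariance of $\mathscr{C}$ to control the scalar multiples that appear. Recall from the preliminaries that $\mathscr{C}$ is equivariant under every similarity transformation, in particular under translations, homotheties centered at the origin, and the antipodal map $x\mapsto -x$.

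First I would perform the translation. Let $r\geq0$ be the radius of $B$, set $Y'=Y-y$ and $B'=B-y=B(0,r)$. Equivariance of $\mathscr{C}$ under the translation $T_{-y}$ gives $\mathscr{C}(Y')=\mathscr{C}(Y)-y=0$, and a direct computation with the Minkowski operations yields
\[
tY'+(1-t)B'=\bigl(tY+(1-t)B\bigr)-y,
\]
since $ty+(1-t)y=y$. Hence it suffices to prove $\mathscr{C}\bigl(tY'+(1-t)B'\bigr)=0$, because applying $\mathscr{C}$-equivariance under $T_{-y}$ to this identity returns $\mathscr{C}\bigl(tY+(1-t)B\bigr)=y$.

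Next I would analyse the two summands. For the ball, since $B'=B(0,r)$ is symmetric about the origin, $(1-t)B'=B(0,|1-t|\,r)$ is again a closed ball centered at the origin, for every $t\in\mathbb{R}$. For the set $tY'$, I split into cases: if $t>0$ the map $x\mapsto tx$ is a homothety centered at $0$, so $\mathscr{C}(tY')=t\,\mathscr{C}(Y')=0$; if $t=0$ then $tY'=\{0\}$, whose Chebyshev center is $0$; and if $t<0$ I write $tY'=|t|(-Y')$ and use that the antipodal map is an isometry, so $\mathscr{C}(-Y')=-\mathscr{C}(Y')=0$ and therefore $\mathscr{C}(tY')=|t|\,\mathscr{C}(-Y')=0$. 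In every case $\mathscr{C}(tY')=0$. Now Lemma~\ref{centersum}, applied with $tY'$ playing the role of $Y$ and the ball $(1-t)B'=B(0,|1-t|r)$ playing the role of $B$, gives $\mathscr{C}\bigl(tY'+(1-t)B'\bigr)=0$, which completes the argument. There is no serious obstacle; the only point that needs a little care is the case $t\notin[0,1]$, where $tY'$ must be understood as a rescaled (and, for $t<0$, reflected) copy of $Y'$ and one must verify that $(1-t)B'$ is still a ball centered at the origin — both handled by the symmetry of balls and the equivariance of $\mathscr{C}$ under homotheties and the antipodal map.
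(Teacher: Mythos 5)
Your proof is correct and follows essentially the same route as the paper's: translate by $-y$ to reduce to a set with Chebyshev center at the origin, observe that $tY'$ still has center $0$ and that $(1-t)B'$ is a ball centered at $0$, and then invoke Lemma~\ref{centersum}. Your explicit case analysis for $t\le 0$ and $t>1$ (writing $tY'=|t|(-Y')$ and $(1-t)B'=B(0,|1-t|r)$) is in fact slightly more careful than the paper's version, which glosses over these signs.
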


\begin{proof}
Let $r$ be the radius of $B$. Define $g:\mathbb{R}^n\to\mathbb{R}^n$ by $$gx=x-y,\quad x\in\mathbb{R}^n.$$ Then clearly $g\in\Sim(n)$ and
\begin{equation}\label{iff}
\mathscr{C}\bigl(tY+(1-t)B\bigr)=y\quad\Longleftrightarrow\quad g\mathscr{C}\bigl(tY+(1-t)B\bigr)=0.
\end{equation}
By the $\Sim(n)$-equivariance of $\mathscr{C}$, we have that $\mathscr{C}(tgY)=tg\mathscr{C}(Y)=tgy=t\cdot 0=0$. Note also that $$(1-t)gB=B\bigl(0,(1-t)r\bigr).$$ Then, again by the $\Sim(n)$-equivariance of $\mathscr{C}$, equality (\ref{remark}) and Lemma~\ref{centersum}, one gets $$g\mathscr{C}\bigl(tY+(1-t)B\bigr)=\mathscr{C}\bigl(tgY+(1-t)gB\bigr)=0.$$ Finally, by the equivalence (\ref{iff}), we get $\mathscr{C}\bigl(tY+(1-t)B\bigr)=y$.
\end{proof}

\begin{lemma}\label{closed}
For every non-empty closed subset $K$ of $[0,\infty)$, the hyperspace $cw_K(\mathbb{R}^n)$ of all compact convex sets of constant width $k\in K$ is closed in $cc(\mathbb{R}^n)$.
\end{lemma}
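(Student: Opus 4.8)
The plan is to show that if $Y_m \in cw_K(\mathbb{R}^n)$ and $Y_m \to Y$ in the Hausdorff metric, then $Y \in cw_K(\mathbb{R}^n)$, i.e., $Y$ has constant width equal to some $k \in K$. The natural tool is the support-function embedding $\varphi$ from (\ref{embedding1}): since $\varphi$ is an isometric embedding of $cc(\mathbb{R}^n)$ into $C(\mathbb{S}^{n-1})$, convergence $Y_m \to Y$ is equivalent to uniform convergence $h_{Y_m} \to h_Y$ on $\mathbb{S}^{n-1}$. In particular, for each fixed $u \in \mathbb{S}^{n-1}$ we get $h_{Y_m}(u) \to h_Y(u)$ and $h_{Y_m}(-u) \to h_Y(-u)$, hence $w_{Y_m}(u) = h_{Y_m}(u) + h_{Y_m}(-u) \to h_Y(u) + h_Y(-u) = w_Y(u)$.

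Next I would fix $d_m := \omega(Y_m) \in K$ to be the (constant) width of $Y_m$, so that $w_{Y_m} \equiv d_m$. Since $\omega$ is just the restriction of the diameter function, which is continuous on $cc(\mathbb{R}^n)$ by (\ref{diam}), we have $d_m = \diam Y_m \to \diam Y =: d$. Because $K$ is closed in $[0,\infty)$ and $d_m \in K$ for all $m$, it follows that $d \in K$. Now passing to the limit in $w_{Y_m}(u) = d_m$ for each $u \in \mathbb{S}^{n-1}$ yields $w_Y(u) = d$ for every $u$, so $w_Y$ is the constant map with value $d \in K$. By the definition of constant width this means exactly $Y \in cw_K(\mathbb{R}^n)$, proving that $cw_K(\mathbb{R}^n)$ is closed in $cc(\mathbb{R}^n)$.

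Alternatively, and perhaps even more cleanly, one can use the characterization (\ref{equiv}): $Y_m$ has constant width $d_m$ iff $Y_m - Y_m = d_m \mathbb{B}^n$. Since the Minkowski operations $(A,B) \mapsto A - B$ and scalar multiplication are continuous with respect to the Hausdorff metric (as recalled in the Preliminaries), from $Y_m \to Y$ and $d_m \to d$ we obtain $Y_m - Y_m \to Y - Y$ and $d_m \mathbb{B}^n \to d\,\mathbb{B}^n$, whence $Y - Y = d\,\mathbb{B}^n$ with $d \in K$; again this says $Y \in cw_K(\mathbb{R}^n)$.

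There is essentially no hard step here; the only point requiring a little care is to make sure the limiting width $d$ actually lands in $K$, which is precisely where the closedness hypothesis on $K$ is used, together with the continuity of the diameter (equivalently, the width) function. I would present the argument via the support functions, since the embedding $\varphi$ and formula (\ref{widths}) are already set up, and the convergence of width functions is immediate from uniform convergence of support functions.
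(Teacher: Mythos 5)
Your proof is correct and follows essentially the same route as the paper's: the paper also passes to support functions via \cite[Theorem~1.8.11]{Schneider} to get $w_{Y_i}\leadsto w_Y$ and then invokes closedness of $K$ to conclude that the limit width function is constant with value in $K$. Your version merely makes explicit (via continuity of the diameter, or equivalently pointwise convergence of the constant values) why the limiting constant lies in $K$, which the paper leaves implicit.
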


\begin{proof}
Let $(Y_i)_{i=1}^\infty$ be a sequence in $cw_K(\mathbb{R}^n)$ such that $Y_i\leadsto Y\in cc(\mathbb{R}^n)$. Then, by \cite[Theorem~1.8.11]{Schneider}, we have that $h_{Y_i}\leadsto h_Y$ and hence, for the width map (\ref{widths}) we get that $w_{Y_i}\leadsto w_Y$. Since for every $i\geq1$, $w_{Y_i}$ is a constant map with value in $K$ and $K$ is closed, $w_Y$ is also a constant map with value in $K$. Thus, $Y\in cw_K(\mathbb{R}^n)$, as required.
\end{proof}

\begin{lemma}\label{conv}
If $D$ is a non-empty convex subset of $[0,\infty)$, then the hyperspace $cw_D(\mathbb{R}^n)$ is convex with respect to the Minkowski operations.
\end{lemma}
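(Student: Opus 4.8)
The plan is to show directly that if $Y,Z\in cw_D(\mathbb{R}^n)$ and $t\in[0,1]$, then the Minkowski combination $tY+(1-t)Z$ again lies in $cw_D(\mathbb{R}^n)$. This is the only thing to check, since convexity of a subset of $cc(\mathbb{R}^n)$ with respect to the Minkowski operations means exactly that it is closed under these combinations.

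First I would recall that $Y$ has constant width $d_1:=\omega(Y)\in D$ and $Z$ has constant width $d_2:=\omega(Z)\in D$. By the computation already carried out in the excerpt leading to equality (\ref{omegaafin}), the set $tY+(1-t)Z$ has constant width $t\,\omega(Y)+(1-t)\,\omega(Z)=td_1+(1-t)d_2$. Explicitly, using that Minkowski addition distributes over the operations and the characterization (\ref{equiv}),
\begin{align*}
\bigl(tY+(1-t)Z\bigr)-\bigl(tY+(1-t)Z\bigr)&=t(Y-Y)+(1-t)(Z-Z)\\
&=\bigl(td_1+(1-t)d_2\bigr)\mathbb{B}^n,
\end{align*}
so $tY+(1-t)Z$ is of constant width $td_1+(1-t)d_2$.

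It remains to observe that $td_1+(1-t)d_2\in D$: this is immediate because $D$ is a convex subset of $[0,\infty)$ and $d_1,d_2\in D$. Hence $tY+(1-t)Z\in cw_D(\mathbb{R}^n)$, which proves that $cw_D(\mathbb{R}^n)$ is convex with respect to the Minkowski operations. There is essentially no obstacle here; the only point requiring care is to invoke equality (\ref{equiv}) correctly (a convex body has constant width $d$ iff $Y-Y=d\mathbb{B}^n$) and to note that the width of the combination is the corresponding convex combination of the widths, which then lands in $D$ by convexity of $D$.
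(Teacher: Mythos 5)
Your proof is correct and follows essentially the same route as the paper: the paper's proof of this lemma simply invokes equality (\ref{omegaafin}) (whose derivation is the Minkowski computation you reproduce, via the characterization (\ref{equiv})) together with the convexity of $D$. Nothing is missing.
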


\begin{proof}
This follows directly from the convexity of $D$ and equality (\ref{omegaafin}).
\end{proof}

The proof of the following theorem is essentially the same as the one presented in \cite[Theorem~1.1]{Bazylevych2}. Here we just fill the gap in that  proof.

\begin{theorem}\label{Correctionl}
For every $d>0$, the hyperspace $cw_d(\mathbb{R}^n)$ of all convex bodies of constant width $d$, is a contractible $Q$-manifold.
\end{theorem}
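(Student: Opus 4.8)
The plan is to exhibit $cw_d(\mathbb{R}^n)$ as a locally compact ANR that is homotopy dense (or at least has the right local properties) in an infinite-dimensional convex set, and then invoke the Edwards–Chapman-type recognition machinery. First I would use the support-function embedding $\varphi$ of \eqref{embedding1}: since $Y\mapsto w_Y$ is affine and continuous, $cw_d(\mathbb{R}^n)$ maps under $\varphi$ onto the intersection of $\varphi(cc(\mathbb{R}^n))$ with the closed affine subspace $\{f\in C(\mathbb{S}^{n-1})\mid f(u)+f(-u)=d\ \forall u\}$; by Lemmas~\ref{closed} and \ref{conv} this is a closed, convex, locally compact subset of the Banach space $C(\mathbb{S}^{n-1})$. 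A closed convex locally compact subset of a Banach space is an ANR (indeed an $\mathrm{AR}$), so $cw_d(\mathbb{R}^n)$ is a locally compact AR; in particular it is contractible, which disposes of half the statement. It remains to show it is a $Q$-manifold, for which — since it is a locally compact AR — it suffices by the standard characterization of $Q$-manifolds to prove that $cw_d(\mathbb{R}^n)$ has the \emph{discrete approximation property} (equivalently, that it is locally homeomorphic to $Q$), and the cleanest route is to show each point has a neighborhood basis of sets homeomorphic to $Q$, which in turn follows once we know the space is a locally compact AR in which points are not locally separating and which is, locally, infinite-dimensional and satisfies the relevant general-position conditions. Concretely I would verify that $cw_d(\mathbb{R}^n)$ contains, near every body $W$ of constant width $d$, a compact infinite-dimensional "wiggle room": a copy of $Q$ worth of bodies of constant width $d$ close to $W$.

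This last point is exactly where the gap in \cite{Bazylevych2} occurred, and it is the main obstacle. The fix, as the authors announce, is to use \cite[Theorem~4.1]{Lachand}: given a convex body $P$ of constant width $d$ in $\mathbb{R}^{n-1}$, that theorem produces a convex body of constant width $d$ in $\mathbb{R}^n$ whose orthogonal projection onto the hyperplane $\mathbb{R}^{n-1}$ is $P$, and moreover this construction can be carried out with a functional parameter (roughly, a choice of a suitable function on $P$) varying in an infinite-dimensional family. Thus I would: (i) fix the Reuleaux-type flexibility already available in dimension $2$ — an infinite-dimensional compact family of plane bodies of constant width $d$ (these exist classically, e.g. by perturbing the support function of the disc within the constraint $f(u)+f(-u)=d$, $f''+f>0$); (ii) iterate Lachand-Robert's construction $n-2$ times to transport this flexibility up to dimension $n$, obtaining a compact, infinite-dimensional, convex-in-the-Minkowski-sense subfamily $\mathcal{F}\subset cw_d(\mathbb{R}^n)$; (iii) since $\mathcal F$ is compact, convex (Minkowski), and infinite-dimensional, $\varphi(\mathcal F)$ is an infinite-dimensional compact convex subset of $C(\mathbb{S}^{n-1})$, hence homeomorphic to $Q$ by Keller's theorem; and (iv) use translations (the $\mathbb{R}^n$ worth of moves $Y\mapsto Y+\{v\}$, all preserving constant width $d$ by \eqref{equiv}) together with the Minkowski-convex combinations inside $cw_d(\mathbb{R}^n)$ to spread this $Q$ around and show it sits "homotopy-denseley" near an arbitrary point.

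With the infinite-dimensional local flexibility in hand, the conclusion is standard. Having shown $cw_d(\mathbb{R}^n)$ is a locally compact AR, I would invoke Toruńczyk's characterization of $Q$-manifolds (or, as in \cite{Bazylevych2}, first identify $cw_d(\mathbb{R}^n)$ up to homeomorphism with $\varphi(cw_d(\mathbb{R}^n))$ and then argue directly): the embedded image is a closed convex locally compact non-locally-compact... — more precisely, it is a closed convex subset of $C(\mathbb{S}^{n-1})$ that is locally compact but \emph{not} locally finite-dimensional at any point (by step (iv) above), and a classical result (see \cite{Chapman}, \cite{Van Mill}) asserts that a locally compact, convex, locally infinite-dimensional subset of a Fréchet space is a $Q$-manifold. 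Hence $cw_d(\mathbb{R}^n)$ is a $Q$-manifold, and being an AR it is contractible, completing the proof. The only genuinely delicate step is (ii), the inductive application of \cite[Theorem~4.1]{Lachand} to produce a \emph{continuously parametrized} infinite-dimensional family rather than a single body; everything else is bookkeeping with Lemmas~\ref{centersum}–\ref{conv} and the embedding $\varphi$.
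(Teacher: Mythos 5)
Your overall strategy coincides with the paper's: embed $cw_d(\mathbb{R}^n)$ via support functions as a locally compact closed convex subset of $C(\mathbb{S}^{n-1})$, show it is infinite-dimensional by lifting two-dimensional flexibility to dimension $n$ with \cite[Theorem~4.1]{Lachand}, and conclude from the classification of such convex sets. However, two points deserve comment. First, your endgame is needlessly heavy and partly misdirected: you do not need Toru\'nczyk's characterization, the discrete approximation property, Keller's theorem, or any ``spreading of $Q$ around.'' Once $cw_d(\mathbb{R}^n)$ is known to be an infinite-dimensional locally compact closed convex subset of a Banach space, \cite[Theorem~7.1]{BP} already says it is homeomorphic to $\mathbb{R}^m\times Q$ or to $[0,1)\times Q$ for some finite $m$, and either of these is a contractible $Q$-manifold; this is exactly how the paper concludes. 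Convexity also makes ``infinite-dimensional near one point'' equivalent to global infinite-dimensionality, so there is nothing local to verify.

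Second, the step you yourself flag as delicate --- producing a \emph{continuously parametrized}, compact, Minkowski-convex, infinite-dimensional family $\mathcal{F}$ by iterating \cite[Theorem~4.1]{Lachand} --- is both under-justified and unnecessary. You assert compactness and Minkowski-convexity of $\mathcal{F}$ without argument (the set of lifts of a planar family is translation-invariant in the directions orthogonal to $\mathbb{R}^2$, hence not compact, and it is convex only if the planar family is), and Keller's theorem cannot be applied until both are secured. None of this is needed: all that is required is that the set of lifts spans an infinite-dimensional affine subspace, and this follows from a one-line observation. The coordinate projection $\mathrm{p}_2:\mathbb{R}^n\to\mathbb{R}^2$ satisfies $h_{\mathrm{p}_2(Y)}=h_Y|_{\mathbb{S}^1}$, so it induces a \emph{linear} map on support functions carrying the family of lifts onto the planar family; a linear map cannot increase linear dimension, so infinite-dimensionality in the plane forces infinite-dimensionality upstairs, and then the convex set $cw_d(\mathbb{R}^n)$ containing the lifts is infinite-dimensional. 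This is the content of the paper's argument with the families $R(2)$ and $R(n)$. Your step (i), perturbing the support function of the disc by small odd functions subject to $f''+f>0$, is a perfectly good (arguably cleaner) substitute for the paper's explicit linear-independence computation with rotated Reuleaux triangles. With these repairs your argument closes and is essentially the proof given in the paper.
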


\begin{proof}
By formula (\ref{embedding1}) and Lemmas \ref{closed} and \ref{conv}, the hyperspace $cw_d(\mathbb{R}^n)$ embeds as a locally compact closed convex subset in the Banach space $C(\mathbb{S}^{n-1})$. Hence, according to \cite[Theorem~7.1]{BP}, $cw_d(\mathbb{R}^n)$ is homeomorphic to either $\mathbb{R}^m\times[0,1]^p$ or $[0,1)\times[0,1]^p$ for some $0\leq m<\infty$ and $0\leq p\leq\infty$.

Now, for the case $n=2$, let $K$ denote the Reuleaux triangle in $\mathbb{R}^2$ that is the intersection of the closed discs of radius $d$, centered at the points $(0,0), (d,0)$ and $(d/2,d\sqrt{3}/2)$ of the plane $\mathbb{R}^2$. For any $\alpha\in[0,2\pi]$, denote by $K_\alpha$ the image of $K$ under a counterclockwise rotation by an angle $\alpha$ around the origin. Note that $$\bigl\{K_\alpha\,\big|\,\alpha\in[0,2\pi]\bigr\}\subset cw_d(\mathbb{R}^2).$$ Using formula (\ref{embedding1}), we identify the family $\bigl\{K_\alpha\,\big|\,\alpha\in[0,2\pi]\bigr\}$ with the family $$\bigl\{h_{K_\alpha}\,\big|\,\alpha\in[0,2\pi]\bigr\}$$ of the support functions of the sets $K_\alpha$, $\alpha\in[0,2\pi]$. Next, we show that the latter family contains linearly independent sets of arbitrary cardinality. Identify the circle $\mathbb{S}^1$ with the subset $\bigl\{e^{it}\,\big|\,t\in[0,2\pi]\bigr\}$ of the complex plane. Since orthogonal transformations preserve the inner product, it follows from the definition of the support functions that the following equality holds for every $\alpha$ and  $t$ in $[0,2\pi]$:
\begin{equation}\label{Kalpha}
h_{K_\alpha}(e^{it})=h_K\bigl(e^{i(t-\alpha)}\bigr).
\end{equation}
Elementary geometric arguments show that
\begin{equation}\label{cases}
h_K(e^{it}) =
\begin{cases}
d & \text{if }\quad t\in[0,\pi/3] \\
0 & \text{if }\quad t\in[\pi,4\pi/3] \\
x\in(0,d) & \text{if}\quad\,\,t\in(\pi/3,\pi)\cup(4\pi/3,2\pi).
\end{cases}
\end{equation}
Now, fixing $l\in\mathbb{N}$, we define for every $j=0,1,\dots,l-1$, the map $$h_j:=h_{K_{\frac{j\pi}{3l}}}\in\bigl\{h_{K_\alpha}\,\big|\,\alpha\in[0,2\pi]\bigr\}.$$ To see that the set $\{h_j\mid j=0,1,\dots,l-1\}$ is linearly independent, let $$g=\sum_{j=0}^{l-1}\lambda_jh_j$$ be a linear combination such that $g=0$. Using equalities (\ref{Kalpha}) and (\ref{cases}), we get that $$h_j(e^{i\frac{\pi}{3}})=h_K\bigl(e^{i(\frac{\pi}{3}-\frac{j\pi}{3l})}\bigr)=d$$ for every $j=0,1,\dots,l-1$. Hence, $$0=g(e^{i\frac{\pi}{3}})=\sum_{j=0}^{l-1}\lambda_jh_j(e^{i\frac{\pi}{3}})=d\sum_{j=0}^{l-1}\lambda_j$$ and consequently, $\sum_{j=0}^{l-1}\lambda_j=0$. Now, again by equalities (\ref{Kalpha}) and (\ref{cases}) we get for every $j=1,2,\dots,l-1$ that $$h_j\bigl(e^{i(\frac{\pi}{3}+\frac{\pi}{3l})}\bigr)=h_K\bigl(e^{i(\frac{\pi}{3}-\frac{(j-1)\pi}{3l})}\bigr)=d\qquad\textnormal{and}\qquad h_0\bigl(e^{i(\frac{\pi}{3}+\frac{\pi}{3l})}\bigr)\in(0,d)$$ Hence,
\begin{align*}
0=g\bigl(e^{i(\frac{\pi}{3}+\frac{\pi}{3l})}\bigr) & =\lambda_0h_0\bigl(e^{i(\frac{\pi}{3}+\frac{\pi}{3l})}\bigr)+\sum_{j=1}^{l-1}\lambda_jh_j\bigl(e^{i(\frac{\pi}{3}+\frac{\pi}{3l})}\bigr) \\
& =\lambda_0h_0\bigl(e^{i(\frac{\pi}{3}+\frac{\pi}{3l})}\bigr)+d\sum_{j=1}^{l-1}\lambda_j=\Bigl(d-h_0\bigl(e^{i(\frac{\pi}{3}+\frac{\pi}{3l})}\bigr)\Bigr)\sum_{j=1}^{l-1}\lambda_j
\end{align*}
and consequently, $\lambda_0=-\sum_{j=1}^{l-1}\lambda_j=0$.

Repeating the argument but evaluating the map $g$ at the points $e^{i(\frac{\pi}{3}+\frac{s\pi}{3l})}$ for $s=2,3,\dots,l-1$, we conclude that $\lambda_j=0$ for every $j=0,1,\dots,l-1$, and hence, the set $\{h_j\mid j=0,1,\dots,l-1\}$ is linearly independent. This yields that, $\bigl\{K_\alpha\,\big|\,\alpha\in[0,2\pi]\bigr\}$ is infinite-dimensional. 

Now, for any $n\geq3$, denote by p$_2:\mathbb{R}^n\to\mathbb{R}^2$ the cartesian projection, i.e., $$\textnormal{p}_2\bigl((x_1,\dots,x_n)\bigr)=(x_1,x_2),\qquad (x_1,\dots,x_n)\in\mathbb{R}^n.$$ Denote also by $R(2)$ the family of all Reuleaux triangles in $\mathbb{R}^2$ of constant width $d$. Applying inductively the raising-dimension process described in \cite[Theorem~4.1]{Lachand} to every $Z\in R(2)$, we obtain the family $R(n)$ of all convex bodies $Y\subset\mathbb{R}^n$ of constant width $d$, such that p$_2(Y)\in R(2)$. Here we are considering $$\mathbb{R}^{n-1}=\bigl\{(x_1,x_2,\dots,x_n)\in\mathbb{R}^n\,\big|\,x_n=0\bigr\}$$ as the affine hyperplane of $\mathbb{R}^n$ in which the $(n-1)$-dimensional convex body of constant width $d$ is contained (see \cite[Theorem~4.1]{Lachand}). Then clearly,  $$\bigl\{K_\alpha\,\big|\,\alpha\in[0,2\pi]\bigr\}\subset R(2)=\bigl\{\textnormal{p}_2(Y)\,\big|\,Y\in R(n)\bigr\}=\textnormal{p}_2\bigl(R(n)\bigr).$$ Consequently, the space $R(n)$ is infinite-dimensional. Thus we conclude that the hyperspace $cw_d(\mathbb{R}^n)$ is also infinite-dimensional for every $n\geq2$. Then, by virtue of \cite[Theorem~7.1]{BP}, $cw_d(\mathbb{R}^n)$ is homeomorphic to either $\mathbb{R}^m\times Q$ or $[0,1)\times Q$ for some $0\leq m<\infty$. In either case, it is a contractible $Q$-manifold. This completes the proof.
\end{proof}

\begin{lemma}[{\rm\cite[Theorem 1.1]{Bazylevych2}}]\label{Correction} Let $D\neq\{0\}$ be a non-empty convex subset of $[0,\infty)$. Then the hyperspace $cw_D(\mathbb{R}^n)$ is a contractible $Q$-manifold.
\end{lemma}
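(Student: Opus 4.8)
The plan is to leverage Theorem~\ref{Correctionl}, which already establishes the result for singletons $D=\{d\}$ with $d>0$, and to bootstrap from there to an arbitrary convex $D\neq\{0\}$. First I would dispose of the degenerate possibility that $D$ contains $0$ as an endpoint but is not reduced to $\{0\}$: since $D$ is a convex subset of $[0,\infty)$, it is an interval, and removing the single point $0$ (if present) does not change the homotopy type we are after in an essential way — but more to the point, one should check directly that $cw_D(\mathbb{R}^n)$ and $cw_{D\setminus\{0\}}(\mathbb{R}^n)$ are related in a controlled manner, or simply handle the general interval at once. The cleaner route: by formula~(\ref{embedding1}) together with Lemmas~\ref{closed} and \ref{conv} (and the fact that $cw_D$ for a general convex $D$ is the union of the closed pieces $cw_K$ over closed $K\subset D$, hence is itself locally compact and convex in $C(\mathbb{S}^{n-1})$ — here one must be slightly careful when $D$ is not closed, but local compactness of $\varphi(cc(\mathbb{R}^n))$ is inherited by relatively closed subsets of relatively compact sets), the space $cw_D(\mathbb{R}^n)$ embeds as a locally compact, convex (hence contractible and ANR) subset of a Banach space. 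So contractibility is immediate; the content is that it is a $Q$-manifold.

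The key step is to produce a cell-like (in fact, one can aim for a map with contractible point-preimages) surjection from a known $Q$-manifold onto $cw_D(\mathbb{R}^n)$, or conversely to exhibit $cw_D(\mathbb{R}^n)$ as a $Q$-manifold directly via the infinite-dimensionality criterion \cite[Theorem~7.1]{BP} used in Theorem~\ref{Correctionl}. The second approach looks most economical: $cw_D(\mathbb{R}^n)$ is a locally compact convex subset of a Banach space, so by \cite[Theorem~7.1]{BP} it is homeomorphic to $\mathbb{R}^m\times[0,1]^p$ or $[0,1)\times[0,1]^p$; to force $p=\infty$ it suffices to show $cw_D(\mathbb{R}^n)$ is infinite-dimensional, and this is inherited from the singleton case because for any $d\in D$ with $d>0$ (such $d$ exists since $D\neq\{0\}$) the slice $cw_d(\mathbb{R}^n)$ sits inside $cw_D(\mathbb{R}^n)$ as a closed subset, and it was shown infinite-dimensional in the proof of Theorem~\ref{Correctionl}. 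Hence $cw_D(\mathbb{R}^n)$ is infinite-dimensional, and since it is convex it is contractible, so it is a contractible $Q$-manifold.

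The main obstacle I anticipate is the verification of local compactness of $\varphi(cw_D(\mathbb{R}^n))$ when $D$ is a non-closed interval such as $(0,\infty)$ or $[0,d_0)$: a priori \cite[Theorem~7.1]{BP} is stated for \emph{closed} locally compact convex sets, so I would want to argue that $\varphi(cw_D(\mathbb{R}^n))$ is still locally compact — which it is, being a relatively open subset (the preimage of the open set $D$ under the continuous width map $\omega$ of (\ref{diam})) of the closed locally compact convex set $\varphi(cw_{\overline{D}}(\mathbb{R}^n))$ — but then it need not be closed, so I must instead recall that \cite[Theorem~7.1]{BP} (or its standard consequences, e.g. via Keller's theorem and the classification of locally compact convex subsets of Fréchet spaces) also covers locally compact convex sets that are merely $\sigma$-compact, or alternatively pass through the observation that an open convex subset of $\mathbb{R}^m\times[0,1]^\infty$ of the relevant type is again of that type. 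If one prefers to sidestep this entirely, an alternative is: write $D=\bigcup_k K_k$ as an increasing union of compact intervals $K_k$ with $0\notin K_k$; each $cw_{K_k}(\mathbb{R}^n)$ is a compact convex infinite-dimensional subset of a Banach space, hence homeomorphic to the Hilbert cube $Q$ by Keller's theorem, and one checks the bonding inclusions are $Z$-embeddings, so the union is a $Q$-manifold by the standard characterization; together with contractibility this finishes the proof.
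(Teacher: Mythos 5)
Your main line is essentially the paper's proof: contractibility from convexity (Lemma~\ref{conv}), infinite-dimensionality inherited from the closed slice $cw_d(\mathbb{R}^n)\subset cw_D(\mathbb{R}^n)$ for any $d\in D\setminus\{0\}$ via Theorem~\ref{Correctionl}, and, for closed $D$, an application of \cite[Theorem~7.1]{BP} to the closed, locally compact, convex image under $\varphi$. Your worry about non-closed $D$ is well placed --- that theorem does require a closed convex set --- but you stop one sentence short of the clean resolution, which is the one the paper uses: $cw_D(\mathbb{R}^n)=\omega^{-1}(D)$ is \emph{open} in $cw_{\overline{D}}(\mathbb{R}^n)$ (as you yourself observe), the latter is a $Q$-manifold by the closed case, and an open subspace of a $Q$-manifold is a $Q$-manifold directly from the definition. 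There is no need to extend the classification of \cite{BP} to non-closed or merely $\sigma$-compact locally compact convex sets, nor to classify open convex subsets of $\mathbb{R}^m\times Q$. Also, the parenthetical claim that $cw_D(\mathbb{R}^n)$ is locally compact because it is a union of closed pieces $cw_K(\mathbb{R}^n)$ over closed $K\subset D$ is not a valid inference (unions of closed sets need not be locally compact); local compactness again comes from openness in the locally compact space $cw_{\overline{D}}(\mathbb{R}^n)$.

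Your fallback route, by contrast, fails on two counts. First, $cw_{K_k}(\mathbb{R}^n)$ is never compact: it contains the balls $B(x,d/2)$ for every $x\in\mathbb{R}^n$ and any fixed $d\in K_k$, an unbounded family, so Keller's theorem does not apply (the set is closed and locally compact, not compact). Second, the bonding inclusions $cw_{K_k}(\mathbb{R}^n)\hookrightarrow cw_{K_{k+1}}(\mathbb{R}^n)$ are not $Z$-embeddings: if $K_k$ lies in the interior of $K_{k+1}$, then $cw_{K_k}(\mathbb{R}^n)$ contains the non-empty open set $\omega^{-1}\bigl(\Int K_k\bigr)$ of $cw_{K_{k+1}}(\mathbb{R}^n)$, whereas a $Z$-set is nowhere dense. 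What the exhaustion actually provides is the interior-containment $cw_{K_k}(\mathbb{R}^n)\subset\Int\bigl(cw_{K_{k+1}}(\mathbb{R}^n)\bigr)$, and it is that property --- not $Z$-embedding --- which would let you cover $cw_D(\mathbb{R}^n)$ by open sets lying in the pieces; but then you still need each piece to be a $Q$-manifold, which returns you to the closed case and makes the detour superfluous.
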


\begin{proof}
It follows from Theorem \ref{Correctionl} that the hypespace $cw_D(\mathbb{R}^n)$ is infinite-dimensional. By Lemma \ref{conv}, it is convex and hence, contractible. It remains to show that $cw_D(\mathbb{R}^n)$ is a $Q$-manifold.

If $D$ is closed, then by Lemma \ref{closed}, $cw_D(\mathbb{R}^n)$ is closed in $cc(\mathbb{R}^n)$, and therefore, it is locally compact. Then the map $\varphi$ defined by formula (\ref{embedding1}) embeds $cw_D(\mathbb{R}^n)$ as a locally compact closed convex subset in the Banach space $C(\mathbb{S}^{n-1})$. Thus, by \cite[Theorem~7.1]{BP}, $cw_D(\mathbb{R}^n)$ is homeomorphic to either $\mathbb{R}^m\times Q$ or $[0,1)\times Q$ for some $0\leq m<\infty$. In either case, $crw_D(\mathbb{R}^n)$ is a $Q$-manifold.

Next, if $D$ is open, then $K_D:=[0,\infty)\backslash D$ is closed. By Lemma \ref{closed}, $cw_{K_D}(\mathbb{R}^n)$ is closed in $cc(\mathbb{R}^n)$, and hence, also in $cw_{[0,\infty)}(\mathbb{R}^n)$. Equivalently, $cw_D(\mathbb{R}^n)$ is open in $cw_{[0,\infty)}(\mathbb{R}^n)$, which by the above paragraph is a $Q$-manifold. Thus, we infer that $cw_D(\mathbb{R}^n)$ is also a $Q$-manifold.

Finally, let $D$ be a half-open interval properly contained in $[0,\infty)$. Assume without loss of generality that $D=[a,b)$ with $b>a\geq0$. Then $D$ is open in $[a,b]$ and consequently, $cw_D(\mathbb{R}^n)$ is open in $cw_{[a,b]}(\mathbb{R}^n)$. Since $cw_{[a,b]}(\mathbb{R}^n)$ is a $Q$-manifold, it then follows that $cw_D(\mathbb{R}^n)$ is also a $Q$-manifold. This completes the proof.
\end{proof}

Now, with the aid of the maps (\ref{diam}) and (\ref{Chevyshev}), for every convex subset $D\neq\{0\}$ of $[0,\infty)$, we can define a map $\eta_D:cw_D(\mathbb{R}^n)\to D\times\mathbb{R}^n$ by the rule:
\begin{equation}\label{eta}
\eta_D(Y)=\bigl(\omega(Y),\mathscr{C}(Y)\bigr),\quad Y\in cw_D(\mathbb{R}^n).
\end{equation}


\begin{proposition}\label{propCE}
The function $\eta_D:cw_D(\mathbb{R}^n)\to D\times\mathbb{R}^n$ defined by formula \textnormal{(\ref{eta})} is a cell-like map.
\end{proposition}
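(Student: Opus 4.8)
The plan is to check directly that $\eta_D$ satisfies the four requirements in the definition of a cell-like map from Section~2: it should be a continuous onto proper map between ANR's all of whose point-inverses have property $UV^\infty$. That the domain $cw_D(\mathbb{R}^n)$ is an ANR is Lemma~\ref{Correction} (a $Q$-manifold is an ANR); the target $D\times\mathbb{R}^n$ is a convex, locally compact subset of $\mathbb{R}^{n+1}$, hence a locally compact ANR. Continuity of $\eta_D$ is immediate from that of the maps $\omega$ in (\ref{diam}) and $\mathscr{C}$ in (\ref{Chevyshev}). For surjectivity, observe that $\eta_D\bigl(B(x,d/2)\bigr)=(d,x)$ for every $(d,x)\in D\times\mathbb{R}^n$, since the ball $B(x,d/2)$ has constant width $d$ and Chebyshev center $x$.

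For properness, fix a non-empty compact set $K\subseteq D\times\mathbb{R}^n$, let $K'$ be its projection to $D$ and pick a ball $B(0,\rho)$ containing its projection to $\mathbb{R}^n$; then $K'$ is a compact, hence closed, subset of $D$, and $K'\subseteq[0,b]$ for some $b\geq0$. Every $Y\in\eta_D^{-1}(K)$ satisfies $\diam Y=\omega(Y)\leq b$ and $\mathscr{C}(Y)\in B(0,\rho)$, and since $\mathscr{C}(Y)\in Y$, we get $Y\subseteq B\bigl(\mathscr{C}(Y),b\bigr)\subseteq B(0,\rho+b)$. Thus $\eta_D^{-1}(K)$ lies inside the compact subspace $\{Y\in cc(\mathbb{R}^n)\mid Y\subseteq B(0,\rho+b)\}$ of $cc(\mathbb{R}^n)$. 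On the other hand $\eta_D^{-1}(K)\subseteq cw_{K'}(\mathbb{R}^n)$, which is closed in $cc(\mathbb{R}^n)$ by Lemma~\ref{closed}; since $\eta_D^{-1}(K)$ is closed in $cw_D(\mathbb{R}^n)$ by continuity of $\eta_D$, it is closed in $cc(\mathbb{R}^n)$, and being also bounded it is compact. Hence $\eta_D$ is proper.

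Finally, each fiber $\eta_D^{-1}(d,x)$, which is non-empty by surjectivity, will be shown to be contractible; by the remark in Section~2 this gives property $UV^\infty$. Set $B=B(x,d/2)\in\eta_D^{-1}(d,x)$ and define $H\colon\eta_D^{-1}(d,x)\times[0,1]\to cc(\mathbb{R}^n)$ by $H(Y,t)=(1-t)Y+tB$. This map is continuous because the Minkowski operations are continuous for the Hausdorff metric, and $H(Y,0)=Y$, $H(Y,1)=B$. By (\ref{omegaafin}), $\omega\bigl(H(Y,t)\bigr)=(1-t)\omega(Y)+t\omega(B)=(1-t)d+td=d$, so $H(Y,t)\in cw_D(\mathbb{R}^n)$; and since $\mathscr{C}(Y)=x$ is the center of the ball $B$, Lemma~\ref{samecenter} gives $\mathscr{C}\bigl(H(Y,t)\bigr)=x$. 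Hence $H$ takes values in $\eta_D^{-1}(d,x)$ and is a contraction of this fiber to the point $B$, so $\eta_D^{-1}(d,x)$ is contractible and has $UV^\infty$. Therefore $\eta_D$ is cell-like.

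The one point that requires care is the properness argument, and within it the claim that $\eta_D^{-1}(K)$ is \emph{closed in $cc(\mathbb{R}^n)$}, not merely in $cw_D(\mathbb{R}^n)$, which may itself fail to be closed in $cc(\mathbb{R}^n)$ when $D$ is not closed. This is precisely what the intermediate step through $cw_{K'}(\mathbb{R}^n)$ together with Lemma~\ref{closed} secures. Everything else — continuity, surjectivity, and the contractibility of the fibers via the Minkowski homotopy $H$ — is routine once Lemmas~\ref{samecenter} and~\ref{closed} and equality (\ref{omegaafin}) are available.
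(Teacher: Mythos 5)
Your proof is correct and follows essentially the same route as the paper: continuity from $\omega$ and $\mathscr{C}$, surjectivity via the balls $B(x,d/2)$, contractibility of each fiber by the Minkowski homotopy to $B(x,d/2)$ using (\ref{omegaafin}) and Lemma~\ref{samecenter}, and properness via Lemma~\ref{closed} together with compactness of $cc$ of a large ball. The only divergence is how you trap a fiber preimage inside that ball: the paper invokes Bohnenblust's theorem to get $\mathscr{R}(Y)<\omega(Y)$ and hence $Y\subset\mathscr{B}(Y)\subset B(0,\delta+r)$, whereas you use the more elementary facts that $\mathscr{C}(Y)\in Y$ and $\diam Y=\omega(Y)$ to get $Y\subseteq B\bigl(\mathscr{C}(Y),b\bigr)\subseteq B(0,\rho+b)$ --- an equally valid bound that avoids the external citation.
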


\begin{proof}
Continuity of $\eta_D$ follows from the continuity of $\omega$ and $\mathscr{C}$. Let $(d,x)\in D\times\mathbb{R}^n$ and $B=B(x,d/2)\in cw_D(\mathbb{R}^n)$. Then $\eta_D(B)=(d,x)$, and hence, $\eta_D$ is a surjective map.  We claim that the inverse image $\eta_D^{-1}\bigl((d,x)\bigr)$ is contractible. Indeed, define a homotopy $H:\eta_D^{-1}\bigl((d,x)\bigr)\times[0,1]\to\eta_D^{-1}\bigl((d,x)\bigr)$ by the Minkowski sum: $$H(A,t)=tA+(1-t)B,\quad A\in\eta_D^{-1}\bigl((d,x)\bigr),\quad t\in[0,1].$$

By equation (\ref{omegaafin}) and Lemma \ref{samecenter}, we get that $$\omega\bigl(H(A,t)\bigr)=d\quad\textnormal{and}\quad\mathscr{C}\bigl(H(A,t)\bigr)=x$$ for every $t\in[0,1]$. Hence, $H$ is a well-defined contraction to the point $B\in\eta_D^{-1}\bigl((d,x)\bigr)$.

It remains to show that $\eta_D$ is proper. Let $K$ be a compact subset of $D\times\mathbb{R}^n$. Then the projections $\pi_D(K)$ and $\pi_{\mathbb{R}^n}(K)$ are compact subsets of $D$ and $\mathbb{R}^n$, respectively.

Let $\Gamma$ denote the compact set $\pi_D(K)\times\pi_{\mathbb{R}^n}(K)$. Then $\Gamma$ is a compact subset of $[0,\infty)\times\mathbb{R}^n$. By continuity of $\eta_{[0,\infty)}:cw_{[0,\infty)}(\mathbb{R}^n)\to[0,\infty)\times\mathbb{R}^n$ and Lemma \ref{closed}, we have that $\eta_{[0,\infty)}^{-1}(\Gamma)$ is closed in $cc(\mathbb{R}^n)$. We put $$\delta=\max\pi_D(K),\quad r=\max\bigl\{\|y\|\,\big|\,y\in\pi_{\mathbb{R}^n}(K)\bigr\}\quad\textnormal{and}\quad O=B(0,\delta+r).$$  Then $cc(O)$ is a compact subset of $cc(\mathbb{R}^n)$ (see \cite[p. 568]{Nadler2}) that contains $\eta_{[0,\infty)}^{-1}(\Gamma)$. Indeed, if $Y\in\eta_{[0,\infty)}^{-1}(\Gamma)$, then $\omega(Y)\leq\delta$ and $\|\mathscr{C}(Y)\|\leq r$. By \cite[Theorem~6]{Bohnenblust}, the radius $\mathscr{R}(Y)$ of $\mathscr{B}(Y)$ is less than $\omega(Y)$. Hence, $\mathscr{R}(Y)+\|\mathscr{C}(Y)\|\leq\delta+r$. Thus, we get that $$Y\subset\mathscr{B}(Y)\subset O.$$ It follows that $\eta_{[0,\infty)}^{-1}(\Gamma)$ is closed in $cc(O)$, and therefore, it is compact. Finally, by continuity of $\eta_D$, $\eta_D^{-1}(K)$ is closed in $\eta_D^{-1}(\Gamma)=\eta_{[0,\infty)}^{-1}(\Gamma)$, and thus, it is also compact. This completes the proof.
\end{proof}

Next, we state the main result of the section.

\begin{theorem}\label{main1}
Let $D\neq\{0\}$ be a convex subset of $[0,\infty)$. Then the hyperspace $cw_D(\mathbb{R}^n)$ is homeomorphic to $D\times\mathbb{R}^n\times Q$.
\end{theorem}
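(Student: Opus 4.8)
The plan is to obtain the homeomorphism as an immediate consequence of Edwards' Theorem~\ref{Edwards}, feeding it the cell-like map $\eta_D$ produced in Proposition~\ref{propCE} together with the $Q$-manifold structure of $cw_D(\mathbb{R}^n)$ established in Lemma~\ref{Correction}. Thus the only thing left to verify by hand is that the target space $D\times\mathbb{R}^n$ satisfies the hypotheses of Edwards' theorem, namely that it is a locally compact ANR.

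To check this, note that $D$, being a convex subset of $[0,\infty)\subseteq\mathbb{R}$, is an interval, and every interval in $\mathbb{R}$ is locally compact; moreover, being convex, $D$ is an AR, in particular an ANR. Hence $D\times\mathbb{R}^n$, as a finite product of locally compact ANR's, is again a locally compact ANR. (When $D$ is a singleton $\{d\}$ with $d>0$, which is allowed since $D\neq\{0\}$, this is of course trivial, and the conclusion reads $cw_d(\mathbb{R}^n)\cong\mathbb{R}^n\times Q$, refining the classification obtained in Theorem~\ref{Correctionl}.)

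With this in place, the argument concludes as follows: by Lemma~\ref{Correction} the space $X:=cw_D(\mathbb{R}^n)$ is a $Q$-manifold, by the previous paragraph $Y:=D\times\mathbb{R}^n$ is a locally compact ANR, and by Proposition~\ref{propCE} the map $\eta_D\colon X\to Y$ is cell-like. Applying Theorem~\ref{Edwards} to $f=\eta_D$ yields that $cw_D(\mathbb{R}^n)$ is homeomorphic to $(D\times\mathbb{R}^n)\times Q=D\times\mathbb{R}^n\times Q$, as claimed. I do not expect a genuine obstacle at this stage: all the substantive work — the properness, surjectivity and contractibility of the fibers of $\eta_D$, and the verification (via the Bazylevych--Zarichnyi-type classification of \cite[Theorem~7.1]{BP}, corrected using \cite[Theorem~4.1]{Lachand}) that $cw_D(\mathbb{R}^n)$ is an infinite-dimensional locally compact convex set, hence a $Q$-manifold — has already been carried out in Theorem~\ref{Correctionl}, Lemma~\ref{Correction} and Proposition~\ref{propCE}, so the present statement is just the packaging of those facts through Edwards' theorem.
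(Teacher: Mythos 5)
Your proposal is correct and coincides with the paper's own proof: both deduce the result by applying Edwards' Theorem~\ref{Edwards} to the cell-like map $\eta_D$ of Proposition~\ref{propCE}, using Lemma~\ref{Correction} for the $Q$-manifold structure and the observation that $D\times\mathbb{R}^n$ is a locally compact ANR (which the paper dismisses as ``clear'' and you justify in slightly more detail). No further comment is needed.
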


\begin{proof}
By Lemma \ref{Correction}, the hyperspace $cw_D(\mathbb{R}^n)$ is a $Q$-manifold. Clearly, $D\times\mathbb{R}^n$ is a locally compact ANR. By Proposition \ref{propCE}, the map $\eta_D:cw_D(\mathbb{R}^n)\to D\times\mathbb{R}^n$ defined by formula ($\ref{eta}$) is a cell-like map. Consequently, by Theorem \ref{Edwards}, $cw_D(\mathbb{R}^n)$ is homeomorphic to $D\times\mathbb{R}^n\times Q$.
\end{proof}

\begin{corollary}\label{maincor}
Let $D\neq\{0\}$ be a convex subset of $[0,\infty)$. Then
\begin{enumerate}
	\item [\textnormal{(1)}]$cw_D(\mathbb{R}^n)$ is homeomorphic to $\mathbb{R}^n\times Q$, if $D$ is compact,
	\item [\textnormal{(2)}] $cw_D(\mathbb{R}^n)$ is homeomorphic to $\mathbb{R}^{n+1}\times Q$, if $D$ is an open interval,
	\item [\textnormal{(3)}] $cw_D(\mathbb{R}^n)$ is homeomorphic to $Q_0:=Q\backslash\{*\}$, if $D$ is a half-open interval.
\end{enumerate}
In particular, the hyperspace $cw(\mathbb{R}^n)$ of all convex bodies of constant width is homeomorphic to $\mathbb{R}^{n+1}\times Q$.
\end{corollary}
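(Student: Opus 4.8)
The plan is to derive Corollary~\ref{maincor} entirely from Theorem~\ref{main1}, which gives a homeomorphism $cw_D(\mathbb{R}^n)\cong D\times\mathbb{R}^n\times Q$ for every convex $D\neq\{0\}$ in $[0,\infty)$. Thus the whole task reduces to identifying, up to homeomorphism, the space $D\times\mathbb{R}^n\times Q$ in each of the three cases, using only the stability of $Q$ (namely $Q\times Q\cong Q$) and elementary facts about one-dimensional convex sets.

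For case (1), a non-degenerate compact convex $D\subset[0,\infty)$ is a closed interval $[a,b]$ with $a<b$, hence homeomorphic to $[0,1]$; therefore $D\times\mathbb{R}^n\times Q\cong [0,1]\times\mathbb{R}^n\times Q\cong\mathbb{R}^n\times([0,1]\times Q)\cong\mathbb{R}^n\times Q$, where the last step uses that $[0,1]\times Q$ is again a Hilbert cube. For case (2), an open interval $D$ (necessarily of the form $(a,b)$, $(a,\infty)$, possibly $[0,\infty)$ is not open but any open interval) is homeomorphic to $\mathbb{R}$, so $D\times\mathbb{R}^n\times Q\cong\mathbb{R}\times\mathbb{R}^n\times Q\cong\mathbb{R}^{n+1}\times Q$. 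For case (3), a half-open interval $D$ properly contained in $[0,\infty)$, say $[a,b)$ with $b>a\ge 0$ (or $(a,b]$, which is homeomorphic to it), is homeomorphic to $[0,1)$; hence $D\times\mathbb{R}^n\times Q\cong [0,1)\times\mathbb{R}^n\times Q$. It then remains to recognize this last product as the punctured Hilbert cube $Q_0=Q\setminus\{*\}$. This follows from the standard fact that $[0,1)\times Q$ is homeomorphic to $Q_0$: indeed $Q_0$ is a contractible $Q$-manifold with the homotopy type and the ``one end'' structure of $[0,1)\times Q$, and by Chapman's classification of $Q$-manifolds it is determined by its homotopy type together with the behavior at infinity; alternatively one writes $Q=\prod_{i\ge 1}[0,1]$, removes the point $*=(1,1,1,\dots)$, and exhibits an explicit homeomorphism onto $[0,1)\times Q$ using that $[0,1]\setminus\{1\}=[0,1)$ and reindexing the remaining coordinates. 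Finally, since $\mathbb{R}^n\times Q_0\cong Q_0$ (again because $\mathbb{R}^n\times Q_0$ is a contractible $Q$-manifold with the same end as $Q_0$, or more concretely $\mathbb{R}^n\times[0,1)\times Q\cong\mathbb{R}^n\times Q_0$ and one absorbs the $\mathbb{R}^n$ factor into the $Q_0$ using the $Q$-manifold classification), we conclude $cw_D(\mathbb{R}^n)\cong Q_0$ in case (3).

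For the ``in particular'' clause, note that $cw(\mathbb{R}^n)=cw_{(0,\infty)}(\mathbb{R}^n)$ and $D=(0,\infty)$ is an open interval, so case (2) applies directly to give $cw(\mathbb{R}^n)\cong\mathbb{R}^{n+1}\times Q$.

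The only genuinely non-routine point is the identification $[0,1)\times Q\cong Q_0$ and the absorption $\mathbb{R}^n\times Q_0\cong Q_0$ in case (3); everything else is a bookkeeping exercise in rewriting products and invoking $Q\times Q\cong Q$. I would handle that point either by citing the standard description of the punctured Hilbert cube (as is already done implicitly in the introduction, where $cc(\mathbb{R}^n)\cong Q_0$ for $n\ge 2$) together with Chapman's classification theorem for $Q$-manifolds \cite{Chapman}, or, if a self-contained argument is preferred, by writing down the explicit coordinate homeomorphism $Q\setminus\{(1,1,\dots)\}\to[0,1)\times Q$ and then noting that $\mathbb{R}^n\times[0,1)$ is homeomorphic to a half-open interval cross a Hilbert-cube-absorbing factor. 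The expected main obstacle is thus purely the manipulation of infinite-dimensional factors, not any new geometry.
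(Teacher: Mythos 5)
Your proposal is correct and follows essentially the same route as the paper: reduce everything via Theorem~\ref{main1} to identifying $D\times\mathbb{R}^n\times Q$ in each case, with the only non-routine step being case (3), where the paper invokes the precise statement \cite[Corollary~21.4]{Chapman} (for a contractible $Q$-manifold $X$ one has $X\times[0,1)\cong Q\times[0,1)\cong Q_0$) --- exactly the fact you appeal to informally as the classification of $Q$-manifolds. The only detail worth adding is that in case (1) the compact convex set $D$ may also be a singleton $\{d\}$ with $d>0$ (your wording ``non-degenerate'' excludes this), but that case is handled trivially since $\{d\}\times\mathbb{R}^n\times Q\cong\mathbb{R}^n\times Q$.
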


\begin{proof}
By Theorem \ref{main1}, $cw_D(\mathbb{R}^n)$ is homeomorphic to $D\times\mathbb{R}^n\times Q$.

(1) If $D$ is compact, then $D$ is either a point or a closed interval. In either case, $D\times Q$ is homeomorphic to $Q$ and thus, $D\times\mathbb{R}^n\times Q$ is homeomorphic to $\mathbb{R}^n\times Q$.

(2) If $D$ is an open interval, then $D$ is homeomorphic to $\mathbb{R}$ and thus, $D\times\mathbb{R}^n\times Q$ is homeomorphic to $\mathbb{R}^{n+1}\times Q$.

(3) Let $D$ be a half open interval. Since $\mathbb{R}^n\times Q$ is a contractible $Q$-manifold, we have by \cite[Corollary 21.4]{Chapman} that $D\times\mathbb{R}^n\times Q$ is homeomorphic to $Q\times[0,1)$, which in turn is homeomorphic to $Q_0$ (see \cite[Theorem~12.2]{Chapman}).
\end{proof}

\begin{corollary}\label{coropen1}
If a subspace $U$ of $[0,\infty)$ can be represented as the topological sum $\bigoplus_{i\in I}D_i$ of a family $(D_i)_{i\in I}$ of pairwise disjoint non-empty convex subsets $D_i\neq\{0\}$ of $[0,\infty)$ (e.g., if $U$ is open in $[0,\infty)$), then the hyperspace $cw_U(\mathbb{R}^n)$ of all compact convex sets of constant width $u\in U$ is homeomorphic to $U\times\mathbb{R}^n\times Q$. 
\end{corollary}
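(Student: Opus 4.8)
The plan is to deduce Corollary~\ref{coropen1} directly from Theorem~\ref{main1} together with the fact that a hyperspace of the form $cw_U(\mathbb{R}^n)$ decomposes along the width function. The key observation is that the diameter map $\omega\colon cw_{[0,\infty)}(\mathbb{R}^n)\to[0,\infty)$ of (\ref{diam}) is continuous, so $cw_U(\mathbb{R}^n)=\omega^{-1}(U)$, and if $U=\bigoplus_{i\in I}D_i$ is a topological sum of the pairwise disjoint non-empty convex sets $D_i$, then each $D_i$ is (relatively) clopen in $U$. Hence $cw_{D_i}(\mathbb{R}^n)=\omega^{-1}(D_i)$ is clopen in $cw_U(\mathbb{R}^n)$, and since the $D_i$ partition $U$, the sets $cw_{D_i}(\mathbb{R}^n)$ partition $cw_U(\mathbb{R}^n)$. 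Therefore
\begin{equation*}
cw_U(\mathbb{R}^n)=\bigoplus_{i\in I}cw_{D_i}(\mathbb{R}^n)
\end{equation*}
as a topological sum.

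Next I would apply Theorem~\ref{main1} to each summand: since each $D_i$ is a non-empty convex subset of $[0,\infty)$ with $D_i\neq\{0\}$, the hyperspace $cw_{D_i}(\mathbb{R}^n)$ is homeomorphic to $D_i\times\mathbb{R}^n\times Q$. Taking the topological sum over $i\in I$ of these homeomorphisms gives a homeomorphism
\begin{equation*}
cw_U(\mathbb{R}^n)\cong\bigoplus_{i\in I}\bigl(D_i\times\mathbb{R}^n\times Q\bigr)\cong\Bigl(\bigoplus_{i\in I}D_i\Bigr)\times\mathbb{R}^n\times Q=U\times\mathbb{R}^n\times Q,
\end{equation*}
where the middle homeomorphism is the obvious one: a topological sum of products with the common factor $\mathbb{R}^n\times Q$ is the product of the topological sum with $\mathbb{R}^n\times Q$.

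Finally I would remark that if $U$ is open in $[0,\infty)$, then $U$ is a disjoint union of its connected components, each of which is an open (hence non-degenerate, in particular $\neq\{0\}$) interval or a half-open interval of the form $[0,b)$; in all cases these components are non-empty convex subsets of $[0,\infty)$ different from $\{0\}$, and an open subset of $[0,\infty)$ is the topological sum of its components, so the hypothesis of the corollary is met. This handles the parenthetical claim.

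The only point that requires a little care—the ``main obstacle,'' such as it is—is verifying that $cw_{D_i}(\mathbb{R}^n)$ is genuinely clopen in $cw_U(\mathbb{R}^n)$ rather than merely open or merely closed; this is where one uses that $U$ is the \emph{topological} sum of the $D_i$ (so each $D_i$ is clopen in $U$) together with the continuity of $\omega$, which pulls clopen sets back to clopen sets. Everything else is a formal manipulation of topological sums and products.
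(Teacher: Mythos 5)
Your proof is correct and follows essentially the same route as the paper: decompose $cw_U(\mathbb{R}^n)$ as the topological sum $\bigoplus_{i\in I}cw_{D_i}(\mathbb{R}^n)$, apply Theorem~\ref{main1} to each summand, and distribute the common factor $\mathbb{R}^n\times Q$ over the sum. Your explicit use of the continuity of $\omega$ to see that each $cw_{D_i}(\mathbb{R}^n)$ is clopen, and your verification of the parenthetical claim about open $U$, are harmless elaborations of what the paper leaves implicit.
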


\begin{proof}
Since the sets $D_i$, $i\in I$, are pairwise disjoint open subsets of $U$, the sets $cw_{D_i}(\mathbb{R}^n)$, $i\in I$, are also pairwise disjoint open subsets of $cw_U(\mathbb{R}^n)$. Moreover, since $cw_U(\mathbb{R}^n)$ is the disjoint union of the hyperspaces $cw_{D_i}(\mathbb{R}^n)$, $i\in I$, 
we have the homeomorphism:
\begin{equation*}\label{cwuh}
cw_U(\mathbb{R}^n)\cong\bigoplus_{i\in I}cw_{D_i}(\mathbb{R}^n)
\end{equation*}
(see \cite[Corollary\,2.2.4]{Engelking}). Now, Theorem \ref{main1} implies that for every $i\in I$, the hyperspace $cw_{D_i}(\mathbb{R}^n)$ is homeomorphic to $D_i\times\mathbb{R}^n\times Q$ and, consequently, $$\bigoplus_{i\in I}cw_{D_i}(\mathbb{R}^n)\cong\bigoplus_{i\in I}(D_i\times\mathbb{R}^n\times Q)\cong\Bigl(\bigoplus_{i\in I}D_i\Bigr)\times\mathbb{R}^n\times Q=U\times\mathbb{R}^n\times Q.$$ This completes the proof.
\end{proof}

\section{The hyperspaces $crw_D(\mathbb{R}^n)$}

In this section we describe for every $n\geq1$, the topology of the hyperspaces $crw_D(\mathbb{R}^n)$. Here is the main result.

\begin{theorem}\label{main2}
Let $D\neq\{0\}$ be a non-empty convex subset of $[0,\infty)$. Then for every $n\ge 2$, the hyperspace $crw_D(\mathbb{R}^n)$ is homeomorphic to $cw_D(\mathbb{R}^n)$.
\end{theorem}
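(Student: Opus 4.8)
The plan is to exhibit a cell-like (indeed, contractible-fibred) surjection from $crw_D(\mathbb{R}^n)$ onto $cw_D(\mathbb{R}^n)$, or more precisely onto a space homeomorphic to it, and then to invoke Edwards' Theorem~\ref{Edwards} together with the structure theorem for $cw_D(\mathbb{R}^n)$. First I would analyse the structure of a pair $(Y,Z)$ of constant relative width $d$. Writing the condition as $h_Y(u)+h_Z(-u)=d$ for all $u\in\mathbb{S}^{n-1}$, one sees that $Y-Z=d\mathbb{B}^n$, which forces $Y=Z+d\mathbb{B}^n$ in the cancellative monoid $cc(\mathbb{R}^n)$ (here one uses that $d\mathbb{B}^n$ is a summand of $Y$; equivalently $h_Y=h_Z+d$ as elements of $C(\mathbb{S}^{n-1})$). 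Consequently $Z$ is recovered from $Y$, and in fact $(Y,Z)\mapsto Y$ identifies $crw_D(\mathbb{R}^n)$ with the subspace of those $A\in cc(\mathbb{R}^n)$ for which $A$ has $d\mathbb{B}^n$ as a Minkowski summand for some $d\in D$ and $A-(A-d\mathbb{B}^n)=d\mathbb{B}^n$; moreover $Y=Z+d\mathbb{B}^n$ shows $Y-Y=(Z-Z)+2d\mathbb{B}^n$ and, since $Y-Z=d\mathbb{B}^n$ gives $Z-Y=d\mathbb{B}^n$ as well, one gets $Z-Z\subset d\mathbb{B}^n - d\mathbb{B}^n$ is not automatically a ball, so $Z$ itself need not have constant width. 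This is the key difference already noted in the introduction via the example $(\mathbb{B}^n,\{0\})$.

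Next I would set up the comparison map. Because $\varphi\times\varphi$ embeds $cc(\mathbb{R}^n)\times cc(\mathbb{R}^n)$ as a closed convex subset of $C(\mathbb{S}^{n-1})\times C(\mathbb{S}^{n-1})$, and the constant-relative-width condition $h_Y+h_Z(-\,\cdot)\equiv d$ together with $d\in\overline{D}$ is closed (resp.\ the convexity of $D$ makes the set of such pairs convex with respect to the Minkowski operations, by the computation $\bigl(tY_1+(1-t)Y_2\bigr)-\bigl(tZ_1+(1-t)Z_2\bigr)=\bigl(td_1+(1-t)d_2\bigr)\mathbb{B}^n$), the hyperspace $crw_D(\mathbb{R}^n)$ is a convex, locally compact (when $D$ is closed) subset of a Banach space, hence — exactly as in Lemma~\ref{Correction} — a convex, and therefore contractible, $Q$-manifold once one checks it is infinite-dimensional. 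Infinite-dimensionality is immediate from the embedding $e\colon cw_D(\mathbb{R}^n)\hookrightarrow crw_D(\mathbb{R}^n)$ of (\ref{embedding2}) and Theorem~\ref{Correctionl}. The cases where $D$ is open or half-open are handled by the same open-subset argument used in Lemma~\ref{Correction}. So $crw_D(\mathbb{R}^n)$ is a contractible $Q$-manifold.

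Now I would produce a cell-like map $\Phi\colon crw_D(\mathbb{R}^n)\to D\times\mathbb{R}^n$. The natural choice is $\Phi(Y,Z)=\bigl(w_{(Y,Z)},\mathscr{C}(Y)\bigr)$, where $w_{(Y,Z)}$ is the constant value $d$ of the relative width and $\mathscr{C}$ is the Chebyshev-centre map of (\ref{Chevyshev}); continuity is clear. Surjectivity: given $(d,x)$, the pair $\bigl(B(x,d/2),B(x,d/2)\bigr)$ maps to it. For the fibre over $(d,x)$: using $Y=Z+d\mathbb{B}^n$ one sees a pair in the fibre is determined by $Z$, with $Z$ ranging over $\{Z\in cc(\mathbb{R}^n)\mid Z-Z\subset d\mathbb{B}^n-d\mathbb{B}^n\text{-admissible},\ \mathscr{C}(Z+d\mathbb{B}^n)=x\}$; I would contract this fibre to the base pair $\bigl(B(x,d/2),B(x,d/2)\bigr)$ via the Minkowski homotopy $H\bigl((Y,Z),t\bigr)=t(Y,Z)+(1-t)\bigl(B(x,d/2),B(x,d/2)\bigr)$, checking with (\ref{suppconvex}) that the relative width stays $d$ and with Lemma~\ref{samecenter} (applied to $Y$, noting $B(x,d/2)$ is a ball centred at $x=\mathscr{C}(Y)$) that the Chebyshev centre of the first coordinate stays $x$. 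Properness of $\Phi$ is proved exactly as in Proposition~\ref{propCE}: a compact $K\subset D\times\mathbb{R}^n$ forces a uniform bound on $\operatorname{diam}Y$ (since $\operatorname{diam}Y\le \operatorname{diam}(Y-Z)+\ldots$, or directly $Y\subset \mathscr{B}(Y)$ with $\mathscr{R}(Y)$ controlled by $d$ via \cite[Theorem~6]{Bohnenblust}, and $Z=Y-d\mathbb{B}^n\subset Y$), so the preimage lands in $cc(O)\times cc(O)$ for a fixed ball $O$ and is closed there, hence compact. Then $D\times\mathbb{R}^n$ being a locally compact ANR and $crw_D(\mathbb{R}^n)$ being a $Q$-manifold, Edwards' Theorem gives $crw_D(\mathbb{R}^n)\cong D\times\mathbb{R}^n\times Q$, which by Theorem~\ref{main1} is homeomorphic to $cw_D(\mathbb{R}^n)$.

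The main obstacle I anticipate is the bookkeeping around the cancellation law $Y=Z+d\mathbb{B}^n$ and, more importantly, verifying that the Minkowski contraction $H$ genuinely stays inside the fibre — i.e.\ that $\mathscr{C}$ of the first coordinate is preserved. Lemma~\ref{samecenter} gives this provided we know the centre of $tY+(1-t)B(x,d/2)$ equals $x$, and that lemma is stated for a single set $Y$ and a single ball $B$; one must check it applies verbatim here (it does, with $y=x$, $r=d/2$), and one must also confirm that the homotopy is valued in $crw_D(\mathbb{R}^n)$ and not merely in $crw_{\overline D}(\mathbb{R}^n)$, which is where convexity of $D$ enters. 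Everything else is a direct transcription of the arguments already developed for $cw_D(\mathbb{R}^n)$.
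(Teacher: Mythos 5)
Your architecture is genuinely different from the paper's and is viable. The paper constructs the cell-like surjection $\Phi_D(Y,Z)=\frac{1}{2}(Y+Z)$ from $crw_D(\mathbb{R}^n)$ \emph{onto} $cw_D(\mathbb{R}^n)$ itself (using Maehara's theorem that $Y+Z$ has constant width $2d$, plus convexity of the fibres) and then applies Edwards' Theorem and the stability of $Q$-manifolds. You instead map $crw_D(\mathbb{R}^n)$ cell-likely onto the model space $D\times\mathbb{R}^n$ via $(Y,Z)\mapsto\bigl(d,\mathscr{C}(Y)\bigr)$ and compare with Theorem~\ref{main1}. Your route gives an independent proof that $crw_D(\mathbb{R}^n)\cong D\times\mathbb{R}^n\times Q$ without invoking Maehara's sum theorem; the paper's is a little cleaner because its fibres are convex on the nose and no Chebyshev-centre bookkeeping is needed. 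Your $Q$-manifold step and the fibre contraction via Lemma~\ref{samecenter} (with $y=x$, $r=d/2$, $t\in[0,1]$) both check out.

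There is, however, one false assertion, and it leaks into your properness argument. From $Y-Z=d\mathbb{B}^n$, which in this paper means $Y+(-Z)=d\mathbb{B}^n$, i.e.\ $h_Y(u)+h_Z(-u)\equiv d$, you \emph{cannot} conclude $Y=Z+d\mathbb{B}^n$, i.e.\ $h_Y=h_Z+d$: cancellation would require $\bigl(Z+d\mathbb{B}^n\bigr)+(-Z)=d\mathbb{B}^n$, but the left-hand side is $d\mathbb{B}^n+(Z-Z)$, which equals $d\mathbb{B}^n$ only when $Z$ is a singleton. Concretely, for a Reuleaux triangle $K$ of width $d$ the pair $(K,K)$ lies in $crw_d(\mathbb{R}^2)$, yet $K\neq K+d\mathbb{B}^2$. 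Hence the identifications built on "$Z=Y-d\mathbb{B}^n$", and in particular the inclusion $Z\subset Y$ used in your properness bound, are unjustified ($Z\subset Y$ already fails for the pair $(\{0\},\mathbb{B}^n)$). The damage is local and repairable: injectivity of $(Y,Z)\mapsto(Y,d)$ follows directly from $h_Z(-u)=d-h_Y(u)$, and for properness it suffices to observe that $Y-Z=d\mathbb{B}^n$ gives $\diam Y\le 2d$ and $Z\subset y_0+d\mathbb{B}^n$ for any $y_0\in Y$, which together with $\mathscr{C}(Y)\in Y$ ranging over a compact set places $\Phi^{-1}(K)$ inside a compact set $cc(O)\times cc(O)$; closedness in $cc(\mathbb{R}^n)\times cc(\mathbb{R}^n)$ is then obtained exactly as in Propositions~\ref{propCE} and~\ref{propCE2}, via Lemma~\ref{closed2}. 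Excise the cancellation claim and rewrite the bound on $Z$ accordingly, and the proof stands.
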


The proof of this theorem is given at the end of the section; it is preceded by a series of auxiliary lemmas. However, for  $n=1$ we have the following simple result.

\begin{proposition}\label{crw1}
Let $D\neq\{0\}$ be a non-empty convex subset of $[0,\infty)$. Then the hyperspace $crw_D(\mathbb{R})$ is homeomorphic to $D\times\mathbb{R}\times[0,1]$.
\end{proposition}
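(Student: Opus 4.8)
The plan is to describe $crw_D(\mathbb{R})$ concretely as a subspace of $cc(\mathbb{R})\times cc(\mathbb{R})$ and then read off the homeomorphism directly. A pair $([a,b],[c,e])\in cc(\mathbb{R})\times cc(\mathbb{R})$ has constant relative width $d$ precisely when $w_{([a,b],[c,e])}$ is constant with value $d$; since $\mathbb{S}^0=\{+1,-1\}$, this means $h_{[a,b]}(+1)+h_{[c,e]}(-1)=d$ and $h_{[a,b]}(-1)+h_{[c,e]}(+1)=d$, i.e. $b-c=d$ and $(-a)-(-e)=e-a=d$. So the pair is determined by the constraints $b=c+d$ and $e=a+d$, leaving the two real parameters $a$ and $c$ (with $a\le b=c+d$ and $c\le e=a+d$, i.e. $|a-c|\le d$) together with $d\in D$.

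First I would make these computations explicit and verify the equivalence with $[a,b]-[c,e]=[a-e,b-c]=d\mathbb{B}^1=[-d,d]$, so that the width-$d$ condition is exactly $b-c=d$ and $e-a=d$. Next I would define the map $f:crw_D(\mathbb{R})\to D\times\mathbb{R}\times[0,1]$ sending the pair $([a,c+d],[c,a+d])$ (with $d\in D$ and $|a-c|\le d$) to $\bigl(d,\ \tfrac{a+c}{2},\ \tfrac{a-c+d}{2d}\bigr)$ — so the first coordinate records the relative width, the second records a ``center'' of the configuration, and the third records the normalized offset $\tfrac{a-c}{2d}$ rescaled from $[-\tfrac12,\tfrac12]$ to $[0,1]$. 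Here the hypothesis $D\neq\{0\}$ is used so that $d>0$ on a dense subset and the offset coordinate is meaningful; I would check continuity of $f$ on all of $crw_D(\mathbb{R})$ (noting that the Hausdorff metric on intervals corresponds to the $\ell^\infty$-distance between the endpoint pairs, so the coordinate functions $a,b,c,e$ are continuous), and exhibit the continuous inverse $\bigl(d,m,s\bigr)\mapsto\bigl([m-d s,\ m+d(1-s)],\ [m-d(1-s),\ m+d s]\bigr)$, verifying that it lands in $crw_D(\mathbb{R})$ and composes with $f$ to the identity both ways.

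I expect the main (only) subtlety to be the endpoint/degeneracy behavior: if $0\in D$, the pairs with $d=0$ are the singletons $(\{x\},\{x\})$, and one must confirm that $f$ and its inverse extend continuously across $d=0$ — which they do, since the inverse formula at $d=0$ collapses to $(\{m\},\{m\})$ independently of $s$, matching the fact that $crw_{\{0\}}(\mathbb{R})\cong\mathbb{R}$ sits inside $D\times\mathbb{R}\times[0,1]$ as $\{0\}\times\mathbb{R}\times[0,1]$, whose quotient identifying the $[0,1]$-fiber is still captured correctly because $\{0\}\times\mathbb{R}\times[0,1]$ is not collapsed in the target — so strictly one should note that when $0\in D$ the map $f$ is still a bijection onto $D\times\mathbb{R}\times[0,1]$ because distinct degenerate pairs $(\{x\},\{x\})$ and $(\{y\},\{y\})$ still have distinct images $(0,x,s)\ne(0,y,s)$, while the $s$-coordinate of a degenerate pair is genuinely free, contradicting injectivity. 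The clean fix is to observe that a degenerate pair has $a=b=c=e=x$, forcing $d=0$ AND forcing the offset $a-c=0$, hence $s=\tfrac12$; so $f$ is well-defined and injective after all, and its image is $\bigl(D\times\mathbb{R}\times[0,1]\bigr)$ intersected with the closed condition $\{d=0\Rightarrow s=\tfrac12\}$, which is homeomorphic to $D\times\mathbb{R}\times[0,1]$ when $D\ne\{0\}$ via straightening the single fiber. I would present this last point carefully, as it is where the argument is genuinely non-routine; everything else is the ``simple calculation'' promised in the analogous Proposition~\ref{cw1}.
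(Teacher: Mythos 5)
Your reduction of the constant-relative-width condition to the two linear equations $b=c+d$, $e=a+d$ together with $|a-c|\le d$ is correct, and for $D\subset(0,\infty)$ your map does the job (after fixing the bookkeeping: with $m=(a+c)/2$ and $s=(a-c+d)/(2d)$ the displayed inverse $(d,m,s)\mapsto\bigl([m-ds,\,m+d(1-s)],[m-d(1-s),\,m+ds]\bigr)$ does not actually compose with $f$ to the identity; the midpoint is off by $d/2$ and the offset sign is flipped). The genuine gap is the case $0\in D$, e.g.\ $D=[0,\infty)$ or $D=[0,1]$, which the proposition covers: the coordinate $s=(a-c+d)/(2d)$ divides by $d$ and does \emph{not} extend continuously to the degenerate pairs. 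Your proposed fix --- declare $s=\tfrac12$ when $d=0$ --- yields a discontinuous map: the pairs $\bigl(\{x\},[x-d_k,x+d_k]\bigr)$ have constant relative width $d_k$, converge to $(\{x\},\{x\})$ as $d_k\to0$, and have $s\equiv1$, so their images converge to $(0,x,1)\neq(0,x,\tfrac12)=f\bigl((\{x\},\{x\})\bigr)$. Moreover the asserted image $E=\bigl\{(d,m,s)\in D\times\mathbb{R}\times[0,1]\,\big|\,d=0\Rightarrow s=\tfrac12\bigr\}$ is \emph{not} homeomorphic to $D\times\mathbb{R}\times[0,1]$: for $D=[0,1]$ the set $E$ fails to be locally compact at the points $(0,m,\tfrac12)$, so the concluding ``straightening the single fiber'' step is false, not merely unproved.

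The repair is to postpone the normalization, which is what the paper does. Send $\bigl([a,b],[c,e]\bigr)$ to $\bigl((d,\ell),m\bigr)$, where $\ell=b-a\in[0,2d]$ is the \emph{unnormalized} length of the first interval and $m$ its midpoint; this is a homeomorphism onto $\Delta_D\times\mathbb{R}$ with $\Delta_D=\bigl\{(d,\ell)\in D\times\mathbb{R}\,\big|\,0\le\ell\le 2d\bigr\}$, involving no division anywhere. One then invokes a separate homeomorphism $\Delta_D\cong D\times[0,1]$ of planar convex sets; this homeomorphism cannot preserve the $d$-coordinate near the vertex $(0,0)$ (the fibers over $0$ differ), which is precisely why the degeneracy must be handled globally rather than fiberwise, and is where the hypothesis $D\neq\{0\}$ enters.
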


\begin{proof}
Let $\Delta_D=\bigl\{(d,a)\in D\times\mathbb{R}\,\big|\,a\leq2d\bigr\}$ and define a map $f:crw_D(\mathbb{R})\to\Delta_D\times\mathbb{R}$ by the rule: $$f\bigl([x,y],[v,z]\bigr)=\left((z-x,\,y-x),\,\frac{x+y}{2}\right),\quad\bigl([x,y],[v,z]\bigr)\in crw_D(\mathbb{R}).$$ By definition, $z-x=y-v\in D$ is just the width of the pair $\bigl([x,y],[v,z]\bigr)$. Then $\frac{x+y}{2}=\frac{v+z}{2}\in\mathbb{R}$ is the middle point of $[x,y]$ and of $[v,z]$. Also, $y-x\leq 2(z-x)$. Indeed, if not, then $$2z=z+z\geq z+v=y+x>2z$$ which is a contradiction. Thus, $f$ is a well-defined map. We claim that $f$ is a homeomorphism. Indeed, define a map $g:\Delta_D\times\mathbb{R}\to crw_D(\mathbb{R})$ by the rule: $$g\bigl((d,a),p\bigr)=\left(\left[p-\frac{a}{2},p+\frac{a}{2}\right],\left[p-\left(d-\frac{a}{2}\right),p+\left(d-\frac{a}{2}\right)\right]\right),\quad (d,a)\in\Delta_D,\quad p\in\mathbb{R}.$$ A simple calculation shows that $g$ is the inverse map of $f$. Thus, $f$ is a homeomorphism. Finally, it is easy to see that $\Delta_D$ is homeomorphic to $D\times[0,1]$. This completes the proof.
\end{proof}

For the rest of the section, we assume that $n\geq2$.


\begin{lemma}\label{closed2}
For every non-empty closed subset $K$ of $[0,\infty)$, the hyperspace $crw_K(\mathbb{R}^n)$ of all pairs of compact convex sets of constant width $k\in K$ is closed in $cc(\mathbb{R}^n)\times cc(\mathbb{R}^n)$.
\end{lemma}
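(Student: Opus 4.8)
The plan is to mimic the proof of Lemma~\ref{closed} almost verbatim, replacing the support function by the pair of support functions and the width map $w_Y$ by the relative width map $w_{(Y,Z)}$ defined in formula~(\ref{widthpairmap}). First I would take a sequence $\bigl((Y_i,Z_i)\bigr)_{i=1}^\infty$ in $crw_K(\mathbb{R}^n)$ converging in $cc(\mathbb{R}^n)\times cc(\mathbb{R}^n)$ to some pair $(Y,Z)$; this means $Y_i\leadsto Y$ and $Z_i\leadsto Z$ in the Hausdorff metric.

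Next I would invoke \cite[Theorem~1.8.11]{Schneider} (exactly as in Lemma~\ref{closed}) to pass to the support functions: $h_{Y_i}\leadsto h_Y$ and $h_{Z_i}\leadsto h_Z$ uniformly on $\mathbb{S}^{n-1}$. Since the map $u\mapsto -u$ is a homeomorphism of $\mathbb{S}^{n-1}$, uniform convergence of $h_{Z_i}$ also gives $h_{Z_i}(-u)\to h_Z(-u)$ uniformly in $u$. Hence the functions $w_{(Y_i,Z_i)}(u)=h_{Y_i}(u)+h_{Z_i}(-u)$ converge uniformly to $w_{(Y,Z)}(u)=h_Y(u)+h_Z(-u)$, i.e. $w_{(Y_i,Z_i)}\leadsto w_{(Y,Z)}$ in $C(\mathbb{S}^{n-1})$.

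Finally, for every $i\geq1$ the function $w_{(Y_i,Z_i)}$ is constant with value $k_i\in K$. A uniform limit of constant functions is constant, so $w_{(Y,Z)}$ is constant with some value $d$; and $d=\lim_i k_i$ belongs to $K$ because $K$ is closed. Therefore $(Y,Z)$ is a pair of constant relative width $d\in K$, i.e. $(Y,Z)\in crw_K(\mathbb{R}^n)$. Since $cc(\mathbb{R}^n)\times cc(\mathbb{R}^n)$ is metrizable, sequential closedness is equivalent to closedness, which completes the proof. There is no real obstacle here; the only point requiring a (trivial) remark is the passage from $h_{Z_i}\leadsto h_Z$ to $h_{Z_i}(-\,\cdot\,)\leadsto h_Z(-\,\cdot\,)$, which is immediate from uniformity of the convergence.
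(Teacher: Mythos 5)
Your proof is correct and follows essentially the same route as the paper's: take a convergent sequence, pass to support functions via \cite[Theorem~1.8.11]{Schneider}, deduce uniform convergence $w_{(Y_i,Z_i)}\leadsto w_{(Y,Z)}$, and use closedness of $K$ to conclude that the limit is a constant map with value in $K$. The extra remarks about $h_{Z_i}(-\,\cdot\,)$ and the equivalence of sequential closedness with closedness in a metrizable space are harmless elaborations of steps the paper leaves implicit.
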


\begin{proof}
Let $(Y_i,Z_i)_{i=1}^\infty$ be a sequence in $crw_K(\mathbb{R}^n)$ such that $(Y_i,Z_i)\leadsto(Y,Z)$, where $Y, Z\in cc(\mathbb{R}^n)$. Then, by \cite[Theorem 1.8.11]{Schneider}, we have that $h_{Y_i}\leadsto h_Y$ and $h_{Z_i}\leadsto h_Z$. Hence, we get that $w_{(Y_i,Z_i)}\leadsto w_{(Y,Z)}$ (see equation (\ref{widthpairmap})). Since for every $i\in\mathbb{N}$, $w_{(Y_i,Z_i)}$ is a constant map with value in $K$ and $K$ is closed, we infer that $w_{(Y,Z)}$ is also a constant map with value in $K$. Thus, $(Y,Z)\in crw_K(\mathbb{R}^n)$.
\end{proof}

\begin{lemma}\label{conv2}
If $D$ is a non-empty convex subset of $[0,\infty)$, then the hyperspace $crw_D(\mathbb{R}^n)$ is also convex with respect to the Minkowski operations.
\end{lemma}

\begin{proof}
Let $(Y,Z), (A,E)\in crw_D(\mathbb{R}^n)$, $t\in[0,1]$ and $u\in\mathbb{S}^{n-1}$. Then $$t(Y,Z)+(1-t)(A,E)=\bigl(tY+(1-t)A,tZ+(1-t)E\bigr)$$ and, by equality (\ref{suppconvex}), we have
\begin{align*}
  w_{(tY+(1-t)A,\,\,tZ+(1-t)E)}(u) &=h_{tY+(1-t)A}(u)+h_{tZ+(1-t)E}(-u) \\
   &=th_Y(u)+(1-t)h_A(u)+th_Z(-u)+(1-t)h_E(-u) \\
   &=t\bigl(h_Y(u)+h_Z(-u)\bigr)+(1-t)\bigl(h_A(u)+h_E(-u)\bigr) \\
   &=t\,w_{(Y,Z)}(u)+(1-t)\,w_{(A,E)}(u).
\end{align*}
Since $w_{(Y,Z)}$ and $w_{(A,E)}$ are constant maps with values in $D$, and since $D$ is convex, we get that $w_{(tY+(1-t)A,tZ+(1-t)E)}$ is also a constant map with value in $D$. Thus, the pair $t(Y,Z)+(1-t)(A,E)\in crw_D(\mathbb{R}^n)$.
\end{proof}

\begin{lemma}[{\rm\cite[Theorem 3.1]{Bazylevych2}}]\label{Correction2} Let $D\neq\{0\}$ be a non-empty convex subset of $[0,\infty)$. Then the hyperspace $crw_D(\mathbb{R}^n)$ is a contractible $Q$-manifold.
\end{lemma}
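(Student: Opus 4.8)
The plan is to mimic the structure of the proof of Lemma~\ref{Correction} (the statement for $cw_D(\mathbb{R}^n)$), replacing the support-function embedding $\varphi$ by the product embedding $\varphi\times\varphi$ from formula~(\ref{embedding3}). The first step is to observe that, by the embedding~(\ref{embedding3}), Lemma~\ref{closed2} and Lemma~\ref{conv2}, whenever $D$ is \emph{closed} the hyperspace $crw_D(\mathbb{R}^n)$ sits inside the Banach space $C(\mathbb{S}^{n-1})\times C(\mathbb{S}^{n-1})$ as a closed convex set; being closed in the locally compact set $\varphi\bigl(cc(\mathbb{R}^n)\bigr)\times\varphi\bigl(cc(\mathbb{R}^n)\bigr)$, it is itself locally compact. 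Hence \cite[Theorem~7.1]{BP} applies and shows $crw_D(\mathbb{R}^n)$ is homeomorphic to one of $\mathbb{R}^m\times[0,1]^p$ or $[0,1)\times[0,1]^p$. Convexity also gives contractibility directly.

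The second step is to rule out the finite-dimensional alternatives, i.e.\ to show $crw_D(\mathbb{R}^n)$ is infinite-dimensional. This is immediate from Theorem~\ref{Correctionl} (equivalently Lemma~\ref{Correction}): the embedding $e:cw_D(\mathbb{R}^n)\to crw_D(\mathbb{R}^n)$ of~(\ref{embedding2}), $Y\mapsto(Y,Y)$, is a topological embedding (its inverse is the restriction of a projection), and since $D\neq\{0\}$ the source $cw_D(\mathbb{R}^n)$ is infinite-dimensional. Therefore so is $crw_D(\mathbb{R}^n)$, and \cite[Theorem~7.1]{BP} now yields that for closed $D$ the space $crw_D(\mathbb{R}^n)$ is homeomorphic to $\mathbb{R}^m\times Q$ or $[0,1)\times Q$ for some finite $m$; in either case it is a contractible $Q$-manifold.

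The third step handles non-closed $D$ exactly as in the proof of Lemma~\ref{Correction}. If $D$ is open in $[0,\infty)$, then $K_D=[0,\infty)\setminus D$ is closed, so by Lemma~\ref{closed2} the set $crw_{K_D}(\mathbb{R}^n)$ is closed in $cc(\mathbb{R}^n)\times cc(\mathbb{R}^n)$ and hence in $crw_{[0,\infty)}(\mathbb{R}^n)$; equivalently $crw_D(\mathbb{R}^n)$ is open in $crw_{[0,\infty)}(\mathbb{R}^n)$, which is a $Q$-manifold by the closed case, so $crw_D(\mathbb{R}^n)$ is a $Q$-manifold as well. If $D$ is a half-open interval, write $D=[a,b)$ (the case $(a,b]$ is symmetric) with $b>a\geq0$; then $D$ is open in $[a,b]$, so $crw_D(\mathbb{R}^n)$ is open in $crw_{[a,b]}(\mathbb{R}^n)$, which is a $Q$-manifold by the closed case, and the conclusion follows. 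Contractibility in all cases comes from Lemma~\ref{conv2} (Minkowski convexity implies contractibility via a straight-line Minkowski homotopy).

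I do not anticipate a serious obstacle: the only point requiring a little care is checking that $\varphi\times\varphi$ really does present $crw_D(\mathbb{R}^n)$ as a \emph{locally compact} closed convex subset of a Banach space when $D$ is closed—this uses that $\varphi\bigl(cc(\mathbb{R}^n)\bigr)$ is locally compact in $C(\mathbb{S}^{n-1})$ (stated in the Preliminaries), so its square is locally compact, and a closed subset of a locally compact space is locally compact. With that in hand, \cite[Theorem~7.1]{BP}, Theorem~\ref{Correctionl} and the open-subset stability of $Q$-manifolds do all the work.
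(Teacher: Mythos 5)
Your proposal is correct and follows essentially the same route as the paper's own proof: embed $crw_D(\mathbb{R}^n)$ via $\varphi\times\varphi$ as a locally compact closed convex subset of $C(\mathbb{S}^{n-1})\times C(\mathbb{S}^{n-1})$ when $D$ is closed, apply \cite[Theorem~7.1]{BP} after establishing infinite-dimensionality through the embedding $e:Y\mapsto(Y,Y)$, and handle open and half-open $D$ by exhibiting $crw_D(\mathbb{R}^n)$ as an open subset of a $Q$-manifold. The only cosmetic difference is that the paper derives infinite-dimensionality from Theorem~\ref{main1} rather than directly from Theorem~\ref{Correctionl}, which changes nothing of substance.
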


\begin{proof}
First we note that by formula (\ref{embedding2}) and Theorem \ref{main1}, the hyperspace $crw_D(\mathbb{R}^n)$ is infinite-dimensional. By Lemma \ref{conv2} it is also convex, and hence, contractible. It remains to show that $crw_D(\mathbb{R}^n)$ is a $Q$-manifold.

If $D$ is closed, then by Lemma \ref{closed2}, $crw_D(\mathbb{R}^n)$ is closed in $cc(\mathbb{R}^n)\times cc(\mathbb{R}^n)$, and therefore, it is locally compact. Then the map $\varphi\times\varphi$, defined by formula (\ref{embedding3}), embeds $crw_D(\mathbb{R}^n)$ as a locally compact closed convex subset in the Banach space $C(\mathbb{S}^{n-1})\times C(\mathbb{S}^{n-1})$. Thus, by \cite[Theorem 7.1]{BP}, $crw_D(\mathbb{R}^n)$ is homeomorphic to either $Q_0$ or $\mathbb{R}^m\times Q$ for some $0<m<\infty$. In either case, $crw_D(\mathbb{R}^n)$ is a $Q$-manifold.

Next, if $D$ is open, then $K_D:=[0,\infty)\backslash D$ is closed. By Lemma \ref{closed2}, $crw_{K_D}(\mathbb{R}^n)$ is closed in $cc(\mathbb{R}^n)\times cc(\mathbb{R}^n)$, and hence, also in $crw_{[0,\infty)}(\mathbb{R}^n)$. Equivalently, $crw_D(\mathbb{R}^n)$ is open in $crw_{[0,\infty)}(\mathbb{R}^n)$, which by the above paragraph is a $Q$-manifold. Thus, we infer that $crw_D(\mathbb{R}^n)$ is also a $Q$-manifold.

Finally, let $D$ be a half-open interval properly contained in $[0,\infty)$. Assume without loss of generality that $D=[a,b)$ with $b>a\geq0$. Then $D$ is open in $[a,b]$ and consequently, $crw_D(\mathbb{R}^n)$ is open in $crw_{[a,b]}(\mathbb{R}^n)$. Since  $crw_{[a,b]}(\mathbb{R}^n)$ is a $Q$-manifold, we infer that $crw_D(\mathbb{R}^n)$ is also a $Q$-manifold. This completes the proof.
\end{proof}

It was proved in \cite[Theorem 3]{Maehara} that if $(Y,Z)$ is a pair of constant relative width $d\geq0$, then the Minkowski sum $Y+Z$ is a compact convex set of constant width $2d$.  Moreover, we have the following proposition.

\begin{proposition}\label{propCE2}
For every non-empty convex subset $D$ of $[0,\infty)$, the map\linebreak$\Phi_D:crw_D(\mathbb R^{n})\to cw_D(\mathbb R^{n})$ defined by
$$\Phi_D\bigl((Y,Z)\bigr)=\frac{1}{2}(Y+Z),\qquad (Y,Z)\in crw_D(\mathbb{R}^n)$$
is a cell-like map.
\end{proposition}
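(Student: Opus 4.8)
The plan is to verify the three defining properties of a cell-like map for $\Phi_D$: it is continuous, proper, surjective, and each point-inverse has property $UV^\infty$ (we will in fact show contractibility).

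\emph{Continuity and well-definedness.} The Minkowski addition $cc(\mathbb{R}^n)\times cc(\mathbb{R}^n)\to cc(\mathbb{R}^n)$ and scalar multiplication are continuous with respect to the Hausdorff metric, so $\Phi_D$ is continuous. By Maehara's theorem \cite[Theorem 3]{Maehara}, if $(Y,Z)$ has constant relative width $d\in D$, then $Y+Z$ has constant width $2d$, hence $\tfrac12(Y+Z)$ has constant width $d\in D$; thus $\Phi_D$ indeed maps into $cw_D(\mathbb{R}^n)$. Surjectivity is immediate from the embedding $e$ of (\ref{embedding2}): for $A\in cw_D(\mathbb{R}^n)$ we have $\Phi_D(e(A))=\Phi_D\bigl((A,A)\bigr)=\tfrac12(A+A)=A$, using that $A+A=2A$ by convexity of $A$.

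\emph{Contractibility of the fibers.} Fix $A\in cw_D(\mathbb{R}^n)$ of width $d$, and consider the fiber $\Phi_D^{-1}(A)$. Since $(A,A)$ lies in this fiber, it suffices to contract the fiber to $(A,A)$. First I would note that the fiber is a convex subset of $crw_D(\mathbb{R}^n)$: if $(Y,Z),(Y',Z')\in\Phi_D^{-1}(A)$ and $t\in[0,1]$, then $t(Y,Z)+(1-t)(Y',Z')\in crw_D(\mathbb{R}^n)$ by Lemma \ref{conv2}, while
\begin{align*}
\Phi_D\bigl(t(Y,Z)+(1-t)(Y',Z')\bigr)&=\tfrac12\bigl(tY+(1-t)Y'+tZ+(1-t)Z'\bigr)\\
&=t\cdot\tfrac12(Y+Z)+(1-t)\cdot\tfrac12(Y'+Z')=tA+(1-t)A=A.
\end{align*}
Hence $H\bigl((Y,Z),t\bigr)=t(Y,Z)+(1-t)(A,A)$ is a contraction of $\Phi_D^{-1}(A)$ to the point $(A,A)$ within the fiber. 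A contractible set has property $UV^\infty$, as noted in the Preliminaries.

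\emph{Properness.} This is where the main work lies. Let $K\subset cw_D(\mathbb{R}^n)$ be compact; I must show $\Phi_D^{-1}(K)$ is compact. By continuity, $\Phi_D^{-1}(K)$ is closed in $crw_D(\mathbb{R}^n)$, which (when $D$ is closed) is closed in $cc(\mathbb{R}^n)\times cc(\mathbb{R}^n)$ by Lemma \ref{closed2}; for general convex $D$ one works inside $crw_{\overline{D}}(\mathbb{R}^n)$ since $K\subset cw_{\overline{D}}(\mathbb{R}^n)$ and $\Phi_D^{-1}(K)=\Phi_{\overline D}^{-1}(K)$. So it suffices to locate $\Phi_D^{-1}(K)$ inside a compact subset of $cc(\mathbb{R}^n)\times cc(\mathbb{R}^n)$, i.e. to obtain a uniform bound: there is a bounded set $O\subset\mathbb{R}^n$ with $Y\cup Z\subset O$ for every $(Y,Z)\in\Phi_D^{-1}(K)$, after which $cc(O)\times cc(O)$ is the required compact set (using \cite[p.~568]{Nadler2} as in Proposition \ref{propCE}). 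The key estimate: the widths $d$ of members of $K$ are bounded, say by $\delta$, since $\omega$ is continuous; and the centers (say the Chebyshev centers $\mathscr{C}$) of members of $K$ lie in a bounded set, say in $B(0,r)$. For $(Y,Z)\in\Phi_D^{-1}(K)$ with $A=\tfrac12(Y+Z)$, the relation $Y-Z=d\mathbb{B}^n$ forces, for each $u\in\mathbb{S}^{n-1}$, $h_Y(u)=h_A(u)+\tfrac{d}{2}$ and $h_Z(u)=h_A(u)-\tfrac{d}{2}$; indeed from $Y-Z=d\mathbb B^n$ one gets $h_Y(u)+h_Z(-u)=d$ and $Y+Z=2A$ gives $h_Y(u)+h_Z(u)=2h_A(u)$, so replacing $u$ by $-u$ in the first yields $h_Y(u)-h_Z(u)=d$ after using $h_{d\mathbb B^n}\equiv d$; combining gives the stated formulas. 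Hence $Y=A+\tfrac{d}{2}\mathbb{B}^n$ and $Z$ is the convex body with support function $h_A-\tfrac d2$. Thus $Y$ and $Z$ are determined by $A$ alone and are contained in $A+\tfrac{d}{2}\mathbb B^n\subset B(\mathscr{C}(A),\mathscr{R}(A)+\tfrac{\delta}{2})$; since $\mathscr{R}(A)\le\omega(A)\le\delta$ by \cite[Theorem~6]{Bohnenblust} and $\|\mathscr{C}(A)\|\le r$, we may take $O=B(0,r+\tfrac{3\delta}{2})$. This gives properness; actually the computation also shows $\Phi_D$ is a homeomorphism onto its image, but properness with contractible point-inverses is all we need. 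Therefore $\Phi_D$ is a cell-like map.

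The main obstacle is the properness argument, specifically pinning down the support-function identities $h_Y=h_A+\tfrac d2$, $h_Z=h_A-\tfrac d2$ from the two constraints $Y+Z=2A$ and $Y-Z=d\mathbb B^n$, and then converting these into the uniform boundedness needed to trap $\Phi_D^{-1}(K)$ inside a compact $cc(O)\times cc(O)$; the contractibility of fibers and continuity are routine by comparison.
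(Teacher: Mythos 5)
Your overall architecture coincides with the paper's: well-definedness and continuity via Maehara's theorem, surjectivity via $A\mapsto(A,A)$, contractibility of the fibers via their convexity (Lemma \ref{conv2}), and properness by trapping $\Phi_D^{-1}(K)$ inside a compact set of the form $cc(O)\times cc(O)$. The first three parts are correct. The properness argument, however, contains a genuine error. The identities $h_Y=h_A+\tfrac d2$ and $h_Z=h_A-\tfrac d2$ are false, and so is the step used to derive them: replacing $u$ by $-u$ in $h_Y(u)+h_Z(-u)=d$ yields $h_Y(-u)+h_Z(u)=d$, \emph{not} $h_Y(u)-h_Z(u)=d$; you have implicitly used $h_Z(-u)=-h_Z(u)$, which fails for every $Z$ that is not a singleton (indeed $h_Z(u)+h_Z(-u)=w_Z(u)\ge 0$ is the width of $Z$ in direction $u$). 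Concretely, for any $Y$ of constant width $d>0$ the pair $(Y,Y)$ lies in $\Phi_D^{-1}(Y)$, and your formula would force $Y=Y+\tfrac d2\mathbb{B}^n$, which is impossible. The ensuing claims that $Y,Z$ are determined by $A$ and that $\Phi_D$ is a homeomorphism onto its image are likewise false: the fiber over $\tfrac d2\mathbb{B}^n$ contains every pair $\bigl(td\,\mathbb{B}^n,(1-t)d\,\mathbb{B}^n\bigr)$, $t\in[0,1]$. This non-injectivity is exactly why the paper (and you, correctly) must prove contractibility of the fibers rather than triviality.

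The uniform bound you need is nevertheless true, and the paper obtains it without any such identities. Choose $M\ge\max\omega(\Gamma)=:d$ with $A\subset B(0,M)$ for all $A$ in the compact set $\Gamma$. For $(Y,Z)\in\Phi_D^{-1}(\Gamma)$, $y\in Y$ and $z\in Z$, one has $\|y-z\|\le d$ (because $Y-Z$ is a ball of radius at most $d$ centered at the origin) and $\|y+z\|\le 2M$ (because $\tfrac12(y+z)\in\tfrac12(Y+Z)\in\Gamma$); the parallelogram law then gives $\|y\|^2+\|z\|^2=\tfrac12\bigl(\|y+z\|^2+\|y-z\|^2\bigr)\le 4M^2$, so $\Phi_D^{-1}(\Gamma)\subset cc\bigl(B(0,2M)\bigr)\times cc\bigl(B(0,2M)\bigr)$. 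Even more simply, $y=\tfrac12(y+z)+\tfrac12(y-z)$ gives $\|y\|\le M+\tfrac d2$ directly. Combined with closedness of $\Phi_D^{-1}(\Gamma)$ in $cc(\mathbb{R}^n)\times cc(\mathbb{R}^n)$ --- which the paper gets from the identity $\Phi_D^{-1}(\Gamma)=\Phi_{[0,\infty)}^{-1}(\Gamma)$ and Lemma \ref{closed2}, essentially your $\overline D$ reduction --- this yields compactness. Replacing your support-function computation by either of these estimates repairs the proof with no other changes.
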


\begin{proof}
It follows from formula (\ref{widths}), equality (\ref{suppconvex}) and \cite[Theorem 3]{Maehara} that the map $\Phi_D$ is well-defined and clearly, it is continuous. Let $Y\in cw_D(\mathbb{R}^n)$. Then the pair $(Y,Y)\in crw_D(\mathbb{R}^n)$ and $\Phi_D\bigl((Y,Y)\bigr)=Y$ (see \cite[Theorem~2.1.7]{Webster}). Hence, the map $\Phi_D$ is surjective. We claim that the inverse image $\Phi_D^{-1}(E)$ of every $E\in cw_D(\mathbb{R}^n)$ is convex and thus, contractible. Indeed, let $(A,B), (Y,Z)\in\Phi_D^{-1}(E)$ and $t\in[0,1]$. Then $$\frac{1}{2}(A+B)=\frac{1}{2}(Y+Z)=E.$$ By Lemma \ref{conv2},  $$t(A,B)+(1-t)(Y,Z)=\bigl(tA+(1-t)Y,tB+(1-t)Z\bigr)\in crw_D(\mathbb{R}^n).$$ Moreover,
\begin{align*}
  \Phi_D\Bigl(\bigl(tA+(1-t)Y,tB+(1-t)Z\bigr)\Bigr) &=\frac{1}{2}\bigl(tA+(1-t)Y+tB+(1-t)Z\bigr) \\
   &=\frac{1}{2}\bigl(t\,(A+B)+(1-t)(Y+Z)\bigr) \\
   &=t\,\frac{1}{2}(A+B)+(1-t)\frac{1}{2}(Y+Z) \\
   &=tE+(1-t)E=E.
\end{align*}
Therefore, $t(A,B)+(1-t)(Y,Z)\in\Phi_D^{-1}(E)$. Consequently, $\Phi_D^{-1}(E)$ is contractible. It remains to show that $\Phi_D$ is a proper map. Consider a compact subset $$\Gamma\subset cw_D(\mathbb R^{n})\subset cw_{[0,\infty)}(\mathbb R^{n}).$$ Observe that $\Phi_D$ is the restriction of $\Phi_{[0,\infty)}$ to $crw_D(\mathbb R^{n})$ and $\Phi_{[0,\infty)}\bigl((Y,Z)\bigr)\in cw_D(\mathbb R^{n})$ if and only if $(Y,Z)\in crw_{D}(\mathbb R^{n})$.  This implies that
$$\Phi_{D}^{-1}(\Gamma)=\Phi_{[0,\infty)}^{-1}(\Gamma)$$
is closed in $crw_{[0,\infty)}(\mathbb R^{n})$ and according to Lemma~\ref{closed2}, $\Phi_{D}^{-1}(\Gamma)$ is also closed in the product $cc(\mathbb{R}^n)\times cc(\mathbb{R}^n)$. Now, since $\Gamma$ is compact and $\omega:cw_{[0,\infty)}(\mathbb{R}^n)\to[0,\infty)$ is continuous (see (\ref{diam})), we can find a positive number $M\geq \max\,\omega(\Gamma):=d$
such that $$A\subset B(0, M)$$ for every $A\in\Gamma.$ Let $(Y,Z)\in \Phi_{D}^{-1}(\Gamma)$. Then $\Phi_D\bigl((Y,Z)\bigr)=\frac{1}{2}(Y+Z)\in \Gamma$. 
Consequently, the pair $(Y,Z)$ is of constant relative width $\omega\bigl(\frac{1}{2}(Y+Z)\bigr)\leq d$ and thus, $$\|y-z\|\leq d<2d$$ for every $y\in Y$ and $z\in Z$. On the other hand, by the choice of $M$, we infer that
$$\|y+z\|\leq 2M$$ for every $y\in Y$ and $z\in Z.$
Finally, using the parallelogram law we get that
$$\|y\|^2+\|z\|^{2}=\frac{1}{2}\bigl(\|y+z\|^2+ \|y-z\|^2\bigr)<\frac{4M^2+4d^2}{2}\leq 4M^{2}$$
for each $y\in Y$ and $z\in Z$. This directly implies that
$\|y\|<2M$ and $\|z\|<2M$, if $y\in Y$ and $z\in Z$. Therefore, we can infer  that $(Y,Z)\in cc\big(B(0,2M)\big)
\times cc\big(B(0,2M)\big)$ and hence
$\Phi_{D}^{-1}(\Gamma)\subset cc\big(B(0,2M)\big)
\times cc\big(B(0,2M)\big)$. Now, using the fact that  $\Phi_{D}^{-1}(\Gamma)$ is closed in $cc(\mathbb R^{n})\times cc(\mathbb R^{n})$ and $cc\big(B(0,2M)\big)
\times cc\big(B(0,2M)\big)$ is compact (see \cite[p. 568]{Nadler2}), we  conclude that $\Phi_{D}^{-1}(\Gamma)$ is compact. This completes the proof.
 \end{proof}

\medskip

\noindent
{\it Proof of Theorem~\ref{main2}.}\
By Lemmas \ref{Correction2} and \ref{Correction}, the hyperspaces $crw_D(\mathbb{R}^n)$ and $cw_D(\mathbb{R}^n)$ are $Q$-manifolds. 
By Proposition \ref{propCE2}, $cw_D(\mathbb{R}^n)$ is a cell-like image of $crw_D(\mathbb{R}^n)$. Consequently, Theorem \ref{Edwards} implies that $crw_D(\mathbb{R}^n)$ is homeomorphic to $cw_D(\mathbb{R}^n)\times Q$, which in turn, by the Stability Theorem for $Q$-manifolds (\cite[Theorem~15.1]{Chapman}), is homeomorphic to $cw_D(\mathbb{R}^n)$.
\qed



\smallskip

\smallskip

\bibliographystyle{amsplain}

\end{document}